\newtheorem {theorem} {Theorem} [section]
\newtheorem {lemma} {Lemma} [section]
\newtheorem {prop} {Proposition} [section]
\newtheorem{preremark}{Remark}[section]
\newenvironment{remark}%
  {\begin{preremark}\rm}{\end{preremark}}
   \newtheorem{preremark1}{Example}[section]
     \newtheorem{preremark2}{Definition}[section]
\newenvironment{defn}
  {\begin{preremark2}\rm}{\end{preremark2}}
\begin{document}

%\renewcommand{\theequation}{\arabic{section}.\arabic{equation}}

%\numberwithin{equation}{section}

\title[Weak solutions for a bioconvection model]{Weak solutions for a bioconvection model related to Bacillus subtilis}

\author[D.Vorotnikov]{Dmitry Vorotnikov}

\dedicatory{CMUC, Department of Mathematics, University of Coimbra\\ 3001-454 Coimbra, Portugal\\ mitvorot@mat.uc.pt}

\thanks{Research partially supported by CMUC and FCT (Portugal), through
European program COMPETE/FEDER}

\keywords{bioconvection; oxytaxis; attractor; global solvability; Navier-Stokes; Keller-Segel; reaction-diffusion}
\subjclass[2010]{35Q35; 35B41; 35Q92; 76Z99; 92C17}

%\infonum{10}{03}

%\date{January 22, 2010}

\begin{abstract} We consider the initial-boundary value problem for the coupled
Navier-Stokes-Keller-Segel-Fisher-Kolmogorov-Petrovskii-Piskunov
system in two- and three-dimensional domains. The problem describes oxytaxis and growth of Bacillus
subtilis in moving water. We prove existence of global weak solutions to the problem. We distinguish between two cases determined by the cell diffusion term and the space dimension, which are referred as the supercritical and subcritical ones. At the first case, the choice of the growth function enjoys wide range of possibilities: in particular, it can be zero. Our results are new even at the absence of the growth term. At the second case, the restrictions on the growth function are less relaxed: for instance, it cannot be zero but can be Fisher-like. In the case of linear cell diffusion, the solution is regular and unique provided the domain is the whole plane. In addition, we study the long-time behaviour of the problem, find dissipative estimates, and construct attractors. 
\end{abstract}

\maketitle

\section {Introduction}
\newcommand{\wx}{\langle x \rangle}
\newcommand{\sgn}{\text{sgn}}
\newcommand {\R} {\mathbb{R}}
\newcommand {\E} {\mathbf{E}}

\def\fr#1#2{\frac{\partial #1}{\partial #2}}

Let us fix a number $T>0$ and a domain $\Omega\subset \R^{d}$, with $d=2,3$, which can be a bounded open set locally located on one side of its $C^2$-smooth boundary $\partial \Omega$ or the whole space  $\R^{d}$ itself. In the cylinder $Q_{T} =
(0,\, T) \times \Omega$, we
consider the following set of equations
\begin{equation}\label{ksn1}
\partial_t n + u \cdot \nabla  n - \Delta (n^m)= -\nabla\cdot (\chi (c) n \nabla c)+f(n),\end{equation} \begin{equation} \label{ksn2}
\partial_t c + u \cdot \nabla c-\Delta c =-k(c) n,\end{equation} \begin{equation} \label{ksn3}
\partial_t u + u\cdot \nabla u -\Delta u +\nabla p=-n \nabla
\phi,\end{equation} \begin{equation} \label{ksn4}
\nabla \cdot u=0.
\end{equation}

Here $c(t,\,x) : Q_{T} \rightarrow \R$, $n(t,\,x) : Q_{T}
\rightarrow \R$, $u(t,\, x) : Q_{T} \rightarrow \R^{d}$ and
$p(t,x) :  Q_{T} \rightarrow \R$ are the oxygen concentration,
cell concentration, fluid velocity, and hydrostatic pressure,
respectively. The scalar functions $k$, $\chi$ and $f$ determine the oxygen
consumption rate, 
chemotactic sensitivity, and bacterial growth, resp., $\phi:Q_{T} \rightarrow \R$ is the potential produced by the action of physical forces on the cells, and $m\geq 1$ is the nonlinear diffusion exponent. The cases $m=1$ and $f\equiv 0$ are not excluded.

The system is complemented with the no-flux boundary conditions for $n^m$ and $c$, and the no-slip condition for $u$,
\begin{equation} \label{ksnb} \fr {n^m(x)} \nu = 0,\ \fr {c(x)} \nu = 0,\ u(x)=0,\ x\in \partial \Omega,\end{equation}
and with the initial conditions \begin{equation} \label{ksn5} n(0,x)=n_0(x),\ c(0,x)=c_0(x),\ u(0,x)=u_0(x),\ x\in \Omega.\end{equation}

Cf. \cite{w12,w10,pk92,hp05,tw12,ll11,ckl11,tuval,mt96,mt02,mt06,wh07,npr08,l10,flm10,hp09,dlm10}.

\section{Preliminaries}

The symbol $C$ will stand for a generic positive constant that can take different values in different lines, whereas $K_i$, $i=1,2,\dots$, will be fixed positive constants.

%$H,V$. 

Denote $(u,v)=\int_\Omega u(x)v(x)\,dx$, $uv\in L_1$, and $\|u\|=\sqrt{(u,u)}$ (the norm in $L_2$). $\|u\|_l$ stands for the norm in $H^l$, $l\in \mathbb{N}$.

We set $\wx=\sqrt{1+|x|^2}$ in the case $\Omega=\R^d$, and $\wx=1$ for bounded $\Omega$. 

The symbols $C (\mathcal{J}; E) $, $C_w(\mathcal{J}; E)$, $L_2
(\mathcal{J}; E) $ etc. denote the spaces of continu\-ous, weakly continu\-ous, quadratically integrable etc. functions on an interval
$\mathcal{J}\subset \mathbb {R} $ with
values in a Banach space $E $. A pre-norm in the Frechet space $C ([0, +
\infty); E) $ may be defined by the formula
$$ \| v \| _{C ([0, +\infty);E)} =\sum\limits _ {i=1} ^ {+ \infty} 2 ^ {-i} \frac {\|v \| _ {C ([0, i]; E)}} {1 + \| v \| _ {C ([0, i]; E)}}. $$

\begin{defn}\label{defweak}
A triple $(n,\, c,\, u)$ is a \emph{weak
solution} to the problem \eqref{ksn1}--\eqref{ksn5} provided
$$n\geq 0,\ c\geq 0,$$ 
$$
n \in L_{\infty}(0, T; L_{1})\cap L_{2}(0, T; L_{2}) \cap L_{m}(0, T; L_{m}) \cap W^{1}_{1}(0, T; (W^1_\infty)^*),
$$
%$$
%n\ln n \in L_{\infty}(0, T; L_{1}),\  \langle x \rangle n(t,x) \in L_{\infty}(0, T; L_{1}),
%$$
$$
f(n) \in L_{1}(0, T; L_{1}),\ \nabla(n^m)\in L_{1}(0, T; L_{1}), 
$$
%$$
%f(n)\ln n \in L_{1}(0, T; L_{1}),\ \nabla(n^{m/2})\in L_{2}(0, T; L_{2}), 
%$$
%where $\mu$ is a certain positive number depending on $m$,
$$
c \in L_{\infty} (0,T; L_{\infty} \cap
H^{1} )\cap 
L_2 (0, T; H^{2})\cap H^{1}(0, T; (H^{1})^*),
$$
$$
u \in L_{\infty}(0, T; L_{2}) \cap L_2(0,
T; V) \cap W^{1}_{1}(0, T; V^*), 
$$
and for any test functions $\zeta\in W^1_\infty,\theta\in H^1, \psi \in V$ one has 
\begin{equation}\label{weak1}
\frac {d}{dt} (n,\zeta) - (u n,\nabla \zeta) +(\nabla(n^m),\nabla\zeta)-(\chi (c) n \nabla c,\nabla \zeta)=(f(n),\zeta),
\end{equation}

\begin{equation}\label{weak2}
\frac {d}{dt} (c,\theta) - (u c,\nabla \theta)+ (\nabla c,\nabla\theta) +(k(c) n,\theta)=0,
\end{equation}

\begin{equation}\label{weak3}
\frac {d}{dt} (u,\psi) - \sum\limits^d _ {i,j=1} \left(u_i u_j, \frac {\partial\psi_j} {\partial x_i}\right) + (\nabla u,\nabla \psi) +(n \nabla
\phi,\psi)=0
\end{equation}
a.e. on $(0,T)$, and the equalities \eqref{ksn5} hold in the spaces $(W^1_\infty)^*$, $(H^{1})^*$, $V^*$, resp.

%General assumptions on functions. 

\end{defn}

\section{The supercritical case}

\begin{theorem} \label{mb1} Let $m>\frac {d+1} 3$. Let $\phi\in L_1(0,T; L_{1,loc})$ with $\nabla\phi\in L_2(0,T; L_\infty).$ Let $k,\chi$ and $f$ be continuously differentiable functions, $\chi'\geq 0$, $k\geq 0$, $k(0)= 0$, $f(0)\geq 0$ (but $f(0)= 0$ for $\Omega=\R^d$) and \begin{equation} \label{asf1} f(y)\leq f(0)+Cy \end{equation} for $y\geq 0$. 
%Assume also that $|f(y)|\leq C(1+ y^m)$  (in the case $\Omega=\R^d$, $|f(y)|\leq C(y + y^m)$).

Let $n_0\in L_1 \cap L_{\max(1,m/2)}$, $n_0 \ln n_0 \in L_1$, $ \langle\cdot\rangle n_0(\cdot)\in L_1,$ $c_0 \in H^1\cap L_{\infty}$, $n_0 \geq 0$, $c_0 \geq 0$, $u_0 \in H$. Then problem \eqref{ksn1}--\eqref{ksn5} possesses a weak solution. 
\end{theorem}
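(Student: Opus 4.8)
The plan is to construct the weak solution by a multi-level approximation scheme combined with compactness arguments, exploiting the natural entropy/energy structure of the system. First I would regularize the problem: replace $n^m$ by $n^m + \varepsilon n$ (to guarantee uniform parabolicity of the cell equation), truncate the chemotactic drift and the growth term so that the nonlinearities become globally Lipschitz, mollify the initial data, and if $\Omega = \R^d$ work first on an expanding sequence of bounded domains $\Omega_R$. For the resulting regularized system one obtains local-in-time solutions by a Galerkin or fixed-point argument (the coupling $u \mapsto (n,c) \mapsto u$ is handled by Schauder once the drift is bounded), with enough regularity to justify all the manipulations below.

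The heart of the matter is the a priori estimates, uniform in the regularization parameters, which must be strong enough to pass to the limit. The basic ones: testing \eqref{ksn2} with $c$ and using $k \geq 0$ gives control of $c$ in $L_\infty(0,T;L_2)\cap L_2(0,T;H^1)$; an $L_\infty$ bound on $c$ follows from the maximum principle since $-k(c)n \leq 0$ and $u$ is divergence-free; testing \eqref{ksn3} with $u$ and absorbing $(n\nabla\phi,u)$ via $\|\nabla\phi\|_\infty$ and the $L_1$-bound on $n$ (from integrating \eqref{ksn1} and using \eqref{asf1} with a Gronwall argument) yields the energy estimate for $u$. The crucial nonlinear estimate is the coupled entropy inequality: test \eqref{ksn1} with $\ln n$ (or $1+\ln n$) and test a suitable functional of $c$ — typically one works with $\int n\ln n + \tfrac{1}{2}\int \frac{|\nabla c|^2}{\text{(something)}}$ or, since $\chi' \geq 0$, one uses the structure that makes the cross term $\int \chi(c)n\nabla c\cdot\nabla(\ln n) = \int \chi(c)\nabla c\cdot\nabla n$ combine with the Laplacian-of-$c$ term after testing \eqref{ksn2} with $\chi(c)$-weighted quantities. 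This produces, modulo lower-order terms absorbed by $\|\nabla\phi\|_{L_2(L_\infty)}$ and the growth bound, control of $\int n\ln n$ in $L_\infty(0,T)$, of $\nabla c$ in better spaces, and — via the diffusion term — of $\nabla(n^{m/2})$ or $\int |\nabla n|^2 n^{m-2}$ in $L_2(Q_T)$. The hypothesis $m > \frac{d+1}{3}$ is exactly what is needed so that this gradient bound, interpolated with the $L_1\cap L\log L$ bound on $n$ via Gagliardo–Nirenberg, closes the estimate (in particular giving $n\in L_2(0,T;L_2)$ and $n^m\in L_1(0,T;W^1_1)$, hence the drift $\chi(c)n\nabla c \in L_1(Q_T)$ since $c$ is bounded and $\nabla c\in L_2$). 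The weighted bound $\langle x\rangle n \in L_1$ is propagated by testing \eqref{ksn1} with $\langle x\rangle$ to prevent mass escaping to infinity when $\Omega=\R^d$.

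Next I would extract weakly/weakly-$*$ convergent subsequences in all the spaces above, and upgrade to strong convergence where the nonlinearities demand it. The time-derivative bounds (reading off $\partial_t n\in L_1(0,T;(W^1_\infty)^*)$, $\partial_t c\in L_2(0,T;(H^1)^*)$, $\partial_t u\in L_1(0,T;V^*)$ from the equations using the a priori estimates) together with Aubin–Lions–Simon give strong convergence of $n$ in, say, $L_2(Q_T)$-type spaces along a subsequence, strong convergence of $c$ in $L_2(0,T;H^1)$ and a.e., and strong convergence of $u$ in $L_2(Q_T)$; this suffices to pass to the limit in $f(n)$, $k(c)n$, $\chi(c)n\nabla c$ (here one writes $\chi(c)n\nabla c = \chi(c)\nabla(N(n,c))$-type tricks or uses a.e. convergence of $c$, strong $L^2$ of $\nabla c$, and equiintegrability of $n$), the transport terms $un$, $uc$, $u_iu_j$, and $\nabla(n^m)$ (monotonicity of $y\mapsto y^m$ plus the gradient bound identifies the weak limit as $\nabla(\bar n^m)$). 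Sending $\varepsilon\to 0$ and $R\to\infty$ in the right order, and checking the initial conditions via the continuity-in-time coming from the $W^1_1(0,T;\cdot)$ regularity, completes the construction. The main obstacle I anticipate is making the coupled entropy estimate rigorous and genuinely uniform in the approximation — in particular handling the cross-diffusion term with only $\chi'\geq 0$ and no sign or smallness on $\chi$ itself, and verifying that the Gagliardo–Nirenberg interpolation really closes precisely under $m>\frac{d+1}{3}$ rather than a stronger restriction; a secondary difficulty is the unbounded-domain case, where one must combine the domain exhaustion with the weighted $L_1$ estimate to rule out loss of mass at infinity and to make sense of $\nabla\phi\in L_2(L_\infty)$ against a solution that only decays in an $L_1$-weighted sense.
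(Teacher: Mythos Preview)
Your overall architecture matches the paper's: combine the entropy $\int n\ln n$ with $\tfrac12\|\nabla c\|^2$ and $\|u\|^2$ (plus $\int\langle x\rangle n$ when $\Omega=\R^d$), close via Gagliardo--Nirenberg using $m>\tfrac{d+1}{3}$, obtain time-derivative bounds, then approximate and pass to the limit with Aubin--Lions--Simon. The paper's regularization is lighter than yours (it only replaces $f$ by the bounded cutoff $f_N=\tfrac{fN}{|f|+N}$), but that is a minor difference.

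The one place your sketch is vague is exactly the crux: how the chemotactic cross term is handled. Testing \eqref{ksn2} with ``$\chi(c)$-weighted quantities'' to achieve a cancellation generally requires a structural link between $\chi$ and $k$, which is \emph{not} assumed here. The paper instead tests \eqref{ksn2} with $\theta=-\Delta c$ to gain control of $\|c\|_{H^2}^2$, and in the entropy identity integrates by parts
\[
-\int_\Omega \chi(c)\,\nabla c\cdot\nabla n \;=\; \int_\Omega \chi'(c)\,|\nabla c|^2\, n \;+\; \int_\Omega \chi(c)\,(\Delta c)\, n.
\]
The first summand is nonnegative since $\chi'\geq 0$ and $n\geq 0$ (this is the sole use of that hypothesis) and is dropped; the second is bounded by $\varepsilon\|c\|_{H^2}^2+C_\varepsilon\|n\|^2$. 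Gagliardo--Nirenberg then absorbs $\|n\|^2$ into $\|\nabla(n^{m/2})\|^2$: with $\beta=2/m$ (for $d=2$) or $\beta=6/(3m-1)$ (for $d=3$), one has $\|n\|^2\leq C\|\nabla(n^{m/2})\|^\beta\|n\|_{L_1}^{2-m\beta/2}+C$, and $\beta<2$ is precisely $m>\tfrac{d+1}{3}$.

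Two smaller points where the paper is more specific than your sketch. First, the limit in $\nabla(n^m)$ is not taken by monotonicity; instead one proves $n_N^{m/2}\to n^{m/2}$ strongly in $L_2(Q_T)$ (via separate Aubin--Lions arguments for $m<2$ and $m\geq 2$, using different time-derivative bounds), and then writes $\nabla(n^m)=2\,n^{m/2}\nabla(n^{m/2})$ as a weak-times-strong product. Second, the limit $f_N(n_N)\to f(n)$ in $L_1$ goes through Vitali's theorem, for which equiintegrability comes from the bound $\|f(n)\ln n\|_{L_1(Q_T)}\leq C$ that the entropy inequality also produces; you should add this to your list of a priori estimates.
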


\begin{proof} Let us show that a solution $(n,c,u)$ to \eqref{ksn1}--\eqref{ksn5} satisfies the following formal a priori bound:
\begin{multline} \label{bound1} \|u\|_{L_{\infty}(0, T; L_{2})} +  \|n\ln n\|_{L_{\infty}(0, T; L_{1})} +\| \langle\cdot\rangle n\|_{L_{\infty}(0, T; L_{1})} + \|\nabla c\|_{L_{\infty}(0, T; L_{2})} \\
+
\| \nabla u\|_{L_{2}(0, T; L_{2})} + \|n\|_{L_{2}(0, T; L_{2})}+ \|n^{m/2}\|_{L_{2}(0, T; L_{2})}+ \|\nabla n^{m/2}\|_{L_{2}(0, T; L_{2})} \\ +\|f(n)\|_{L_{1}(0, T; L_{1})}+\|f(n)\ln n\|_{L_{1}(0, T; L_{1})}+ \|
c\|_{L_{2}(0, T; H^{2})}  \le C.  \end{multline}

Letting $\zeta\equiv 1$ in \eqref{weak1}, we get 
\begin{equation}\label{weakl1} \frac d {dt} \|n(t)\|_{L_{1}} = \int\limits_{\Omega} f(n(t,x)) \, dx,\end{equation} so \begin{multline}\label{boun1}\frac d {dt} \|n(t)\|_{L_{1}} +\|f_-(n)\|_{L_{1}} = \|f_+(n)\|_{L_{1}} \\ \leq \int\limits_{\Omega} f(0) \, dx +  C \|n(t)\|_{L_{1}} \leq C(1+ \|n(t)\|_{L_{1}}), \end{multline} whence 
\begin{equation} \label{bound2} \|n\|_{L_{\infty}(0, T; L_{1})} + \|f_{-}(n)\|_{L_{1}(0, T; L_{1})} \leq C. \end{equation}
But \begin{equation} \|f_{+}(n)\|_{L_{1}(0, T; L_{1})} \leq C(1+ \|n\|_{L_{1}(0, T; L_{1})}) \leq C(1+ \|n\|_{L_{\infty}(0, T; L_{1})}). \end{equation}
Thus,
\begin{equation}\label{bound21} \|f(n)\|_{L_{1}(0, T; L_{1})} \leq C. \end{equation}

Putting $\theta=c^{p-1}$, $p\geq 2$, in \eqref{weak2}, we obtain $$\frac 1 p \frac d {dt} \|c(t)\|_{L_{p}}^p\leq 0,$$ and thus \begin{equation} \label{bound3} \|c\|_{L_{\infty}(0, T; L_{p})} \leq  \|c(0)\|_{L_{p}}.\end{equation}   
Passing to the limit as $p\to\infty$, we derive \begin{equation} \label{bound4} \|c\|_{L_{\infty}(0, T; L_{\infty})} \leq  C.\end{equation} Hence, 
\begin{equation} \label{bound5} \|\chi(c)\|_{L_{\infty}(0, T; L_{\infty})}+\|k(c)\|_{L_{\infty}(0, T; L_{\infty})} \leq  C.\end{equation}

Note that the fact of non-negativity of $c$ and $n$ is standard and follows from the parabolic comparison principle. 

We now take $\zeta= 1+\ln n$ in \eqref{weak1}, $\theta= -\Delta c$ in \eqref{weak2}, and $\psi= u$ in \eqref{weak3}, arriving at

\begin{equation}\label{wea1}
\frac {d}{dt} \int\limits_{\Omega} n\ln n \, dx + \frac 4 m (\nabla(n^{m/2}),\nabla(n^{m/2}))-(\chi (c) \nabla c,\nabla n)=(f(n),1+\ln n),
\end{equation}

\begin{equation}\label{wea2}
\frac 1 2 \frac {d}{dt} (\nabla c,\nabla c) + (u c,\nabla \Delta c)-(\nabla c,\nabla\Delta c) -(k(c) n,\Delta c)=0,
\end{equation}

\begin{equation}\label{wea3}
\frac 1 2  \frac {d}{dt} (u,u) + (\nabla u,\nabla u) +(n \nabla
\phi,u)=0.
\end{equation}

Integrating by parts, we rewrite \eqref{wea1} as 

\begin{multline}\label{wea11}
\frac {d}{dt} \int\limits_{\Omega} n\ln n \, dx + \frac 4 m \|\nabla(n^{m/2})\|^2 \\ +(\chi' (c) \nabla c,n \nabla c )+(\chi (c) \Delta c, n)=(f(n),1+\ln n),
\end{multline}
and observe that \begin{multline} -(u c,\nabla \Delta c)=  \sum\limits^d _ {i,j=1}  \left(\fr {u_i}{x_j} , c\, \frac{\partial^2 c}{\partial x_i \partial x_j} \right)+\left(u_i , \fr c {x_j}\, \frac{\partial^2 c}{\partial x_i \partial x_j} \right) \\ = \sum\limits^d _ {i,j=1}  \left(\fr {u_i}{x_j} , c\, \frac{\partial^2 c}{\partial x_i \partial x_j} \right)+\frac 12 \left( u_i , \fr {} {x_i}\left[\fr c {x_j} \right]^2\right)=\sum\limits^d _ {i,j=1}  \left(\fr {u_i}{x_j} , c\, \frac{\partial^2 c}{\partial x_i \partial x_j} \right). \end{multline} Now, \eqref{wea2} reads as 

\begin{equation}\label{wea21}
\frac 1 2 \frac {d}{dt} \|\nabla c\|^2 - \sum\limits^d _ {i,j=1} \left(\fr {u_i}{x_j} , c\, \frac{\partial^2 c}{\partial x_i \partial x_j} \right)+(\Delta c,\Delta c) -(k(c) n,\Delta c)=0.
\end{equation}

When $\Omega$ is bounded, due to classical regularity issues for the Neumann problem for the Poisson equation, 
\begin{equation}\label{pineq1}
\|c(t)\|_{2}\leq C (\|\Delta c(t)\|+\|c(t)\|).
\end{equation}
For the whole space, we have 
\begin{equation}\label{pineq2}
\|c(t)\|_{2}=\|c(t)-\Delta c(t)\|\leq \|c(t)\|+\|\Delta c(t)\|
\end{equation}
Hence, in both cases, \begin{equation}\label{pineq}
\|c(t)\|_{2}\leq C(\|\Delta c(t)\|+1), \ t\in (0,T).
\end{equation}

Applying \eqref{pineq} and the Cauchy inequality with epsilon  to \eqref{wea21}, we get 

\begin{equation}\label{weai1}
\frac {d}{dt} \|\nabla c\|^2 +2 K_1 \|c\|^2_2\leq C+K_2 \|\nabla u\|^2+K_3 \|n\|^2.
\end{equation}

Observe that both for $n> 1$ and $n\leq 1$ (since $f$ is $C^1$-smooth), 
\begin{equation}\label{f1}
[f(n)\ln n]_+\leq Cn |\ln n|.
\end{equation}

Therefore, \eqref{wea11} yields 
\begin{multline}\label{weai2}
\frac {d}{dt} \int\limits_{\Omega} n\ln n \, dx + \frac 4 m \|\nabla(n^{m/2})\|^2 + \|[f(n)\ln n]_-\|_{L_1} \\ \leq C+C\|n\|_{L_1}+C\|n \ln n\|_{L_1}+ K_1 \|c\|^2_2+ K_4 \|n\|^2.
\end{multline}

Multiply \eqref{wea3} by $2K_2$ and add with \eqref{weai1} and \eqref{weai2}:
\begin{multline}\label{weai3}
\frac {d}{dt} \|\nabla c\|^2 + \frac {d}{dt} \int\limits_{\Omega} n\ln n \, dx+ K_2\frac {d}{dt} \|u\|^2 \\ +K_1 \|c\|^2_2 + K_2 \|\nabla u\|^2+ \frac 4 m \|\nabla(n^{m/2})\|^2 + \|[f(n)\ln n]_-\|_{L_1} \\ \leq C+C\|n \ln n\|_{L_1}+K_5 \|n\|^2+K_6\|u \nabla \phi \|^2.
\end{multline}

If $\Omega=\R^d$, put $\zeta(x)=\wx$ in \eqref{weak1} (this test function is unbounded, but \eqref{weak1} still holds since we are dealing with strong solutions now): 
\begin{equation}\label{wea1x}
\frac {d}{dt} \|n\langle\cdot\rangle\|_{L_1}=  (u n,\nabla \langle\cdot\rangle) +(n^m,\Delta\langle\cdot\rangle)+(\chi (c) n \nabla c,\nabla \langle\cdot\rangle)+(f(n),\langle\cdot\rangle).
\end{equation}
Let us estimate the terms in the right-hand side:
\begin{equation}\label{wea1x1}
 (u n,\nabla \langle\cdot\rangle)\leq C(\|u\|^2+\|n\|^2),\end{equation}
 \begin{equation}\label{wea1x2}
 (n^m,\Delta\langle\cdot\rangle)\leq C\|n^{m/2}\|^2,\end{equation}
  \begin{equation}\label{wea1x3}
 (\chi (c) n \nabla c,\nabla \langle\cdot\rangle)\leq C(\|n\|^2+\|\nabla c\|^2),\end{equation}
 \begin{equation}\label{wea1x4}
 (f(n),\langle\cdot\rangle)\leq C\| \langle\cdot\rangle n\|_{L_{1}},\end{equation}
 whence
 \begin{equation}\label{wea2x}
3\frac {d}{dt} \|n\langle\cdot\rangle\|_{L_1}\leq K_7(1+\|u\|^2+\|n\|^2+\|n^{m/2}\|^2+\|\nabla c\|^2+\| \langle\cdot\rangle n\|_{L_{1}}).
\end{equation}
If $\Omega$ is bounded and $\wx\equiv 1$, \eqref{wea2x} is a trivial consequence of \eqref{boun1}.  

Let us show that 
\begin{equation}\label{n4}
\|n\|^2+\|n^{m/2}\|^2\leq \frac 2 {(K_5 +K_7)m} \|\nabla(n^{m/2})\|^2 + C.
\end{equation}

Indeed, let $m\leq 2$. Let $\beta=\frac 2 m$ for $d=2$ and $\beta=\frac 6 {3m-1}$ for $d=3$. In both cases $\beta<2$. Then, using the Gagliardo-Nirenberg inequality, we proceed as
\begin{multline}\label{n5}
\|n\|^2+\|n^{m/2}\|^2\leq C(\|n\|^2+\|n^{1/2}\|^2)=C(\|n^{m/2}\|^{4/m}_{L_{4/m}}+\|n\|^2_{L_1}) \\ \leq C+C\|\nabla (n^{m/2})\|^{\beta} \| n^{m/2}\|^{4/m-\beta}_{L_{2/m}}+C \| n^{m/2}\|^{4/m}_{L_{2/m}} \\ =C+C\|\nabla (n^{m/2})\|^{\beta} \| n\|^{2-m\beta/2}_{L_{1}}+C\| n\|^{2}_{L_{1}}\leq C(1+\|\nabla (n^{m/2})\|^{\beta}) \\ \leq \frac 2 {(K_5 +K_7) m} \|\nabla(n^{m/2})\|^2 + C.
\end{multline}
If $m> 2$, employing the $L_p$-interpolation, Young and Gagliardo-Nirenberg inequalities, we have
%\begin{multline}\label{n6}\|n\|^2+\|n^{m/2}\|^2 = \|n\|^2+\|n\|_{L_m}^m \leq \|n\|_{L_1}^{\frac {3m-2}{3m-1}}\|n\|_{L_{3m}}^{\frac {3m}{3m-1}}+\|n\|_{L_1}^{\frac {2m}{3m-1}}\|n\|_{L_{3m}}^{\frac {3m^2-3m}{3m-1}} $$ $$\leq C(\|n\|^2_{L_1}+\|n\|^2_{L_{3m}}+\|n\|_{L_1}^{\frac {2m}{3m-1}}\|n\|_{L_{3m}}^{\frac {3m^2-3m}{3m-1}})\leq C(1+\|n^{m/2}\|^{4/m}_{L_{6}}+\|n\|_{L_1}^{\frac {2m}{3m-1}}\|n^{m/2}\|_{L_{6}}^{\frac {6m-6}{3m-1}})$$ $$\leq C(1+\|\nabla (n^{m/2})\|^{4/m}+\|\nabla(n^{m/2})\|^{\frac {6m-6}{3m-1}}) \leq \frac 2 {K_5 m} \|\nabla(n^{m/2})\|^2 + C,\end{multline}
%Finally, for $m> 2$ and bounded domain $\Omega$, 
\begin{multline}\label{n7}\|n\|^2+\|n^{m/2}\|^2\leq C(\|n^{1/2}\|^2+\|n^{m/2}\|^2)\leq C(1+\|\nabla (n^{m/2})\|^{\frac {2n}{n+2}}\|n^{m/2}\|^{\frac 4{n+2}}_{L_1} +\|n^{m/2}\|^2_{L_1}) \\ \leq \frac 1 {(K_5 +K_7) m} \|\nabla(n^{m/2})\|^2+ C\|n^{m/2}\|^2_{L_1} + C= \frac 1 {(K_5 +K_7) m} \|\nabla(n^{m/2})\|^2+ C\|n\|^m_{L_{m/2}} + C \\ \leq \frac 1 {(K_5 +K_7) m} \|\nabla(n^{m/2})\|^2+ C\|n\|^{\frac {m}{m-1}}_{L_{1}}\|n\|^{\frac {m^2-2m}{m-1}}_{L_{m}} + C \\ \leq \frac 1 {(K_5 +K_7) m} \|\nabla(n^{m/2})\|^2+ C\|n^{m/2}\|^{\frac {2m-4}{m-1}} + C \\ \leq \frac 1 {(K_5 +K_7) m} \|\nabla(n^{m/2})\|^2+ \frac 1 2 \|n^{m/2}\|^{2} + C,\end{multline}
which implies \eqref{n4}.

Since $(n\ln n)_-\leq C \sqrt n$, it is easy to check (cf. \cite{ckl11} in the unbounded case) that  \begin{equation} \label{intm} \| n\ln n \|_{L_{1}}  \leq  K_8 +2 \| \langle\cdot\rangle n\|_{L_{1}}+ \int\limits_{\Omega} n\ln n \, dx.  \end{equation}  

Adding \eqref{wea2x} with  \eqref{weai3}, and taking into account \eqref{bound21},\eqref{n4} and \eqref{intm}, we get 
\begin{multline}\label{weai4}
\frac {d}{dt} \|\nabla c\|^2 + \frac {d}{dt} \int\limits_{\Omega} n\ln n \, dx+3\frac {d}{dt} \|\langle\cdot\rangle n\|_{L_1}+ K_2\frac {d}{dt} \|u\|^2 \\ +K_1 \|c\|^2_2 + K_2 \|\nabla u\|^2+ \frac 2 m \|\nabla(n^{m/2})\|^2 + \|[f(n)\ln n]_-\|_{L_1} \\ \leq C(1+ \|\nabla\phi \|^2_{L_\infty}) \\ \times\left[1+ K_8+\|\nabla c\|^2+ \int\limits_{\Omega} n\ln n \, dx+ 3\| \langle\cdot\rangle n\|_{L_{1}}+K_2\|u\|^2\right].
\end{multline}
Gronwall's inequality and \eqref{intm} yield
\begin{multline}\label{weai5} \|\nabla c\|^2+ \|n\ln n\|_{L_1}+ \| \langle\cdot\rangle n\|_{L_{1}}+K_2\|u\|^2 \\ \leq 1+ K_8+\|\nabla c\|^2+ \int\limits_{\Omega} n\ln n \, dx+ 3\| \langle\cdot\rangle n\|_{L_{1}}+K_2\|u\|^2\leq C,
\end{multline}
and \eqref{weai4} gives \begin{multline} \label{weai6} K_1 \|c\|^2_{L_2(0,T;H^2)} + K_2 \|\nabla u\|^2_{L_2(0,T;L_2)} \\ + \frac 2 m \|\nabla(n^{m/2})\|^2_{L_2(0,T;L_2)} + \|[f(n)\ln n]_-\|_{L_1(0,T;L_1)}\leq C.\end{multline}
To conclude the proof of \eqref{bound1}, it remains to remember \eqref{bound21},\eqref{f1} and \eqref{n4}.

Note that \begin{equation}\label{boundn}\|\nabla (n^m)\|_{L_1(0,T;L_1)}\leq 2 \|\nabla (n^{m/2})\|_{L_2(0,T;L_2)}\|n^{m/2}\|_{L_2(0,T;L_2)}\leq C.\end{equation}

We still require some more estimates. Firstly, let $m< 2$. We find \begin{multline}\label{smm}\|\nabla n\|_{L_{m}(0,T;L_{m})}= \|(\nabla n)^m\|_{L_{1}(0,T;L_{1})} \\ = \|(n^{m-1}\nabla n )^{2-m}(n^{\frac {m-2}2}\nabla n )^{2m-2}\|_{L_{1}(0,T;L_{1})} \\ \leq \|(n^{m-1}\nabla n )^{2-m}\|_{L_{\frac 1 {2-m}}(0,T;L_{\frac 1 {2-m}})}\|(n^{\frac {m-2}2}\nabla n )^{2m-2}\|_{L_{\frac 1 {m-1}}(0,T;L_{\frac 1 {m-1}})} \\ =\|(n^{m-1}\nabla n )\|^{2-m}_{L_{1}(0,T;L_{1})}\|(n^{\frac {m-2}2}\nabla n )\|^{2m-2}_{L_{2}(0,T;L_{2})} \\ \leq C\|\nabla (n^m) \|^{2-m}_{L_{1}(0,T;L_{1})}\|\nabla (n^{m/2}) \|^{2m-2}_{L_{2}(0,T;L_{2})}\leq C.\end{multline}

In the case $m>2$,  let $\zeta=n^{\frac {m-2}2}$ in \eqref{weak1}. Then we derive
\begin{multline}\label{wea1bm}
\frac 2 m \,\frac {d}{dt}\|n^{m/2}\|_{L_1}+ \frac {8m(m-2)}{(3m-2)^2} (\nabla(n^{\frac {3m-2}4}),\nabla(n^{\frac {3m-2}4})) \\ -\frac{m-2}{m}(\chi (c) \nabla c,\nabla (n^{m/2}))+(f_-(n),n^{\frac {m-2}2})=(f_+(n),n^{\frac {m-2}2}).
\end{multline}
Therefore, by the Cauchy-Bunyakovsky-Schwarz and Young inequalities, 
\begin{multline}\label{wea2bm}
\frac {d}{dt}\|n^{m/2}\|_{L_1}+ \|\nabla(n^{\frac {3m-2}4})\|^2+(f_-(n),n^{\frac {m-2}2}) \\ \leq C \left[\|\nabla c\|^2+\|\nabla (n^{m/2})\|^2+\|n^{m/2}\|_{L_1}+\int\limits_{\Omega} f(0)^{m/2} \, dx\right].\end{multline}
Gronwall's lemma and \eqref{bound1} imply \begin{equation} \label{boundbm}
\|n^{m/2}\|_{L_\infty(0,T;L_1)}+ \|\nabla(n^{\frac {3m-2}4})\|_{L_2(0,T;L_2)}+\|f(n)n^{\frac {m-2}2}\|_{L_1(0,T;L_1)} \leq C .
\end{equation}

We find, via a reasoning similar to \eqref{n7}, that 
\begin{equation}\label{n10}\|n^{\frac {3m-2}4}\|_{L_2(0,T;L_2)}\leq C\|\nabla(n^{\frac {3m-2}4})\|_{L_2(0,T;L_2)}+C\leq C.\end{equation}

Now, test \eqref{weak1} by the function $\zeta n^{\frac {m-2}2}$, $\zeta\in W^1_\infty$: 
\begin{multline}\label{wea3bm}
\frac 2 m \left(\frac {d}{dt} n^{m/2}, \zeta \right)+ \frac {8m(m-2)}{(3m-2)^2} (\nabla(n^{\frac {3m-2}4}),\zeta\nabla(n^{\frac {3m-2}4}))+ \frac {4m}{3m-2} (\nabla(n^{\frac {3m-2}4}),n^{\frac {3m-2}4}\nabla \zeta) \\ -\frac{m-2}{m}(\chi (c) \nabla c,\zeta\nabla (n^{m/2}))-(\chi (c) \nabla c,n^{m/2}\nabla\zeta)=(f(n),\zeta n^{\frac {m-2}2}).\end{multline}

Using \eqref{bound1}, \eqref{boundbm}, \eqref{n10}, it is easy to deduce from \eqref{wea3bm} that 
\begin{equation}\label{boundtbm}\int\limits_0^T\left|\frac {d}{dt} (n^{m/2},\zeta)\right|\,dt\leq C\|\zeta\|_{W^1_\infty}.\end{equation}

In the same manner, not necessarily for $m>2$, we derive from \eqref{bound1}, \eqref{boundn}, \eqref{weak1}--\eqref{weak3} that \begin{equation}\label{boundt1}\int\limits_0^T\left|\frac {d}{dt} (n,\zeta)\right|\,dt\leq C\|\zeta\|_{W^1_\infty},\end{equation}
\begin{equation}\label{boundt2}\int\limits_0^T\left|\frac {d}{dt} (c,\theta)\right|^2\,dt\leq C\|\theta\|_1^2,\end{equation}
\begin{equation}\label{boundt3}\int\limits_0^T\left|\frac {d}{dt} (u,\psi)\right|\,dt\leq C\|\psi\|_1.\end{equation}
Note that \eqref{boundt1} coincides with \eqref{boundtbm} for $m=2$. 

Having bounds \eqref{bound1},  \eqref{boundn}, \eqref{boundbm}, \eqref{boundtbm}--\eqref{boundt3} in hand, we can prove the existence of weak solution via approximation of \eqref{ksn1}--\eqref{ksn5} by a more regular problem, and consequent passage to the limit. We omit a major part of the details (see \cite{ckl11,flm10,dlm10,tw12} for similar issues), and restrict ourselves on the peculiarities of passage to the limit in the porous-medium-like and growth terms. For definiteness, we consider the case of bounded $\Omega$ (the unbounded case is very similar, merely the spaces $L_p$ should be replaced by $L_{p,loc}$). 

The growth term $f$ can be approximated by a sequence of bounded functions $f_N=\frac{fN}{|f|+N}$, $N\in \mathbb{N}$. Let $(n_N,c_N, u_N)$ be the corresponding sequence of solutions and $(n,c, u)$ be the limit (intended to be the weak solution). 

Due to \eqref{bound1}, without loss of generality (passing to a subsequence, if necessary) $n_N^{m/2}\to n^{m/2}$ weakly in $L_2(0,T; H^1)$. Assume first that $m\geq 2$.  In view of \eqref{boundtbm}, we can employ the Aubin--Lions--Simon lemma \cite{sim} to get $n_N^{m/2}\to n^{m/2}$ strongly in $L_2(0,T; L_2)$. On the other hand, for $m< 2$, $n_N\to n$ weakly in $L_{m}(0,T;W^1_{m})$ in view of \eqref{smm} and \eqref{bound1}. Due to \eqref{boundt1}, by the Aubin--Lions--Simon lemma we conclude that $n_N\to n$ strongly in $L_{m}(0,T;L_{m})$, whence $n_N^{m/2}\to n^{m/2}$ strongly in $L_2(0,T; L_2)$ again. Hence, in both cases, $$\nabla (n_N^m)= 2 n_N^{m/2} \nabla (n_N^{m/2}) \to 2 n^{m/2} \nabla (n^{m/2})= \nabla (n^m)$$ weakly in $L_1(0,T; L_1)$.

Finally, let us show that $f_N(n_N)\to f(n)$ in $L_1(0,T; L_{1})$. By the Vitali convergence theorem, it suffices to see that $f_N(n_N)\to f(n)$ in measure on $(0,T)\times \Omega$ (here and below we always mean ``up to a subsequence'') and $|f_N(n_N)|$ are uniformly integrable. We have $n_N^{m/2}\to n^{m/2}$ in $L_2(0,T; L_{2})$, thus $n_N\to n$ a.e. in $(0,T)\times \Omega$. Therefore $$f_N(n_N)- f(n)=-\frac{f(n_N)|f(n_N)|}{|f(n_N)|+N}+f(n_N)-f(n)\to 0$$ a.e. and hence in measure. Due to \eqref{bound1}, $\|f_N(n_N)\ln n_N\|_{L_{1}(0, T; L_{1})}\leq C$. Thus,  \begin{multline}\int_{|f_N(n_N)|>M}|f_N(n_N)|\,dx\,dt  \\ \leq C \int_{|f_N(n_N)|>M}|\ln n_N|^{-1}\,dx\,dt \leq C \int_{|f(n_N)|>M}|\ln n_N|^{-1}\,dx\,dt \to 0\end{multline} as $M\to+\infty$. 

\end{proof}

\section{The subcritical case}

\begin{theorem} \label{mr1} Let $1 \leq m\leq\frac {d+1} 3$. Suppose that \begin{equation} \label{asf2} f(y)+C_f y^2\leq f(0)+Cy \end{equation} with some positive $C_f$ independent of $y\geq 0$, and the remaining assumptions of Theorem \ref{mb1} hold. Then problem \eqref{ksn1}--\eqref{ksn5} possesses a weak solution. 
\end{theorem}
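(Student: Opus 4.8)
The plan is to re-run the proof of Theorem \ref{mb1} almost verbatim, the single structural change being the way the a priori bound \eqref{bound1} is obtained: the Gagliardo--Nirenberg absorption \eqref{n4} --- which is exactly where $m>\frac{d+1}3$ was used --- is no longer available for $m\le\frac{d+1}3$, and is replaced by a \emph{direct} control of $\|n\|^2$ in $L_1(0,T)$ coming from the super-linear dissipation built into \eqref{asf2}. Note also that $1\le m\le\frac{d+1}3\le\frac43<2$, so throughout we stay in the regime $m<2$, and the auxiliary computations of Theorem \ref{mb1} valid only for $m>2$ (those around \eqref{wea1bm}--\eqref{boundtbm}) are never invoked.

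\emph{A free $L_2(Q_T)$-bound on $n$.} Taking $\zeta\equiv1$ in \eqref{weak1}, i.e.\ in \eqref{weakl1}, and using \eqref{asf2} in place of \eqref{asf1},
\[
\frac{d}{dt}\|n(t)\|_{L_1}=\int_\Omega f(n(t,x))\,dx\le\int_\Omega f(0)\,dx+C\|n(t)\|_{L_1}-C_f\|n(t)\|^2 .
\]
Since \eqref{bound2} rests only on \eqref{asf1} (which \eqref{asf2} implies) and is therefore already available, $\|n\|_{L_\infty(0,T;L_1)}\le C$; hence the first two terms on the right are bounded, and integrating in $t$,
\[
C_f\|n\|_{L_2(0,T;L_2)}^2\le\|n_0\|_{L_1}+CT\le C .
\]
(For $\Omega=\R^d$ the argument is identical, with $f(0)=0$.) By $L_p$-interpolation, $1\le m<2$ gives $\|n^{m/2}\|^2=\|n\|_{L_m}^m\le C\|n\|_{L_1}^{2-m}\|n\|^{2m-2}$, so also $\|n^{m/2}\|^2\in L_1(0,T)$ and, by H\"older's inequality, $\|n^{m/2}\|_{L_2(0,T;L_2)}\le C$.

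\emph{Closing the a priori estimate.} We then repeat verbatim the computations producing \eqref{wea1}--\eqref{weai3} and the weighted bound \eqref{wea2x}. Every occurrence of $\|n\|^2$ or $\|n^{m/2}\|^2$ on a right-hand side --- precisely the terms that in Theorem \ref{mb1} were absorbed into $\frac4m\|\nabla(n^{m/2})\|^2$ by \eqref{n4} --- is now simply left there, being a function in $L_1(0,T)$ by the previous step. Together with \eqref{intm}, \eqref{bound21}, \eqref{f1} and the hypothesis $\nabla\phi\in L_2(0,T;L_\infty)$, Gronwall's inequality closes the estimate exactly as in \eqref{weai4}--\eqref{weai6}, yielding the full bound \eqref{bound1} (now with the undiminished coefficient $\frac4m$ in front of $\|\nabla(n^{m/2})\|_{L_2(0,T;L_2)}^2$). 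The remaining bounds \eqref{boundn}, the estimate \eqref{smm} for $\nabla n$ in $L_m(0,T;L_m)$ (legitimate because $m<2$), and the time-derivative bounds \eqref{boundt1}--\eqref{boundt3} carry over word for word.

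\emph{Approximation and passage to the limit.} This follows the scheme of Theorem \ref{mb1} with one precaution: the damping \eqref{asf2} must survive uniformly in the regularized problems, so we may not truncate the negative part of $f$. We therefore regularize $f$ \emph{from above only}, e.g.\ $f_N:=f-(f_+)^2/(f_++N)$, $N\in\mathbb N$, so that $f_N\in C^1$, $f_N\le f$ pointwise (whence $f_N(y)\le f(0)+Cy-C_fy^2$ for $y\ge0$, which is all the first step uses), $f_N$ is bounded above by $N$, and $f_N(0)\ge0$ (equal to $0$ for $\Omega=\R^d$). The solutions $(n_N,c_N,u_N)$ of the regularized problems then satisfy \eqref{bound1}, \eqref{boundn}, \eqref{smm}, \eqref{boundt1}--\eqref{boundt3} uniformly in $N$, and the rest is copied: by \eqref{smm}, \eqref{boundt1} and the Aubin--Lions--Simon lemma, $n_N\to n$ strongly in $L_m(0,T;L_m)$, hence $n_N^{m/2}\to n^{m/2}$ strongly in $L_2(0,T;L_2)$ and $\nabla(n_N^m)=2n_N^{m/2}\nabla(n_N^{m/2})\to\nabla(n^m)$ weakly in $L_1(0,T;L_1)$, while $f_N(n_N)\to f(n)$ in $L_1(0,T;L_1)$ by the Vitali theorem, the required uniform integrability coming from $\|f_N(n_N)\ln n_N\|_{L_1(0,T;L_1)}\le C$, exactly as before. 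There is no genuine analytic obstacle in the subcritical case --- it is if anything lighter than the supercritical one, \eqref{n4} and the $m>2$ machinery being both dispensed with; what needs attention is only the bookkeeping (checking that each term previously absorbed by \eqref{n4} is now merely $L_1$ in time, so Gronwall still applies) and the choice of a truncation of $f$ that does not destroy the one-sided quadratic bound \eqref{asf2}.
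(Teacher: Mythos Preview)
Your argument is correct and coincides with the paper's: obtain $\|n\|_{L_2(0,T;L_2)}\le C$ directly from the quadratic damping in \eqref{asf2} via the $L_1$-balance \eqref{weakl1}, use $m\le 2$ to infer $\|n^{m/2}\|_{L_2(0,T;L_2)}\le C$, and then run Gronwall on \eqref{weai3}$+$\eqref{wea2x} with $\|n\|^2+\|n^{m/2}\|^2$ treated as an integrable-in-time coefficient rather than absorbed via \eqref{n4}. Your observation that the symmetric cut-off $f_N=fN/(|f|+N)$ of Theorem~\ref{mb1} does not preserve \eqref{asf2} uniformly in $N$, and your one-sided truncation $f_N=f-(f_+)^2/(f_++N)$, is a valid piece of bookkeeping that the paper leaves implicit.
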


\begin{proof} Let us describe the differences with the proof of Theorem \ref{mb1}. We still need to secure inequality \eqref{bound1}. Firstly, \eqref{weakl1}, apart from yielding \eqref{boun1}, gives \begin{equation}\label{boun1su}\frac d {dt} \|n(t)\|_{L_{1}} +C_f \|n\|^2  \leq C(1+ \|n(t)\|_{L_{1}}), \end{equation} whence 
\begin{equation} \label{bound2su} \|n\|_{L_{2}(0, T; L_{2})} \leq C. \end{equation}
Since $m\leq 2$, \begin{equation} \label{bound3su} \|n^{m/2}\|_{L_{2}(0, T; L_{2})} \leq C(\|n^{1/2}\|_{L_{2}(0, T; L_{2})}+\|n\|_{L_{2}(0, T; L_{2})})\leq C. \end{equation}
Thus, we do need \eqref{n4}, which only holds in the supercritical case, but instead of \eqref{weai4} we have
\begin{multline}\label{weai4su}
\frac {d}{dt} \|\nabla c\|^2 + \frac {d}{dt} \int\limits_{\Omega} n\ln n \, dx+3\frac {d}{dt} \|\langle\cdot\rangle n\|_{L_1}+ K_2\frac {d}{dt} \|u\|^2 \\ +K_1 \|c\|^2_2 + K_2 \|\nabla u\|^2+ \frac 4 m \|\nabla(n^{m/2})\|^2 + \|[f(n)\ln n]_-\|_{L_1} \\ \leq C(1+\|n\|^2+\|n^{m/2}\|^2+ \|\nabla\phi \|^2_{L_\infty}) \\ \times\left[1+ K_8+\|\nabla c\|^2+ \int\limits_{\Omega} n\ln n \, dx+ 3\| \langle\cdot\rangle n\|_{L_{1}}+K_2\|u\|^2\right].
\end{multline}
Gronwall's lemma, \eqref{bound2su}, \eqref{bound3su} and \eqref{intm} imply \eqref{weai5}, \eqref{weai6} and  \eqref{bound1}. 
\end{proof}

\begin{theorem} Let $\Omega=\R^2$, $m=1$, $f$, $\chi$ and $k$ are $C^3$-smooth, $f'(y)+|f''(y)|\leq C$ for $y\geq 0$, $\nabla \phi \in W^2_\infty$ (and independent of $t$), $n_0\in H^2$, $c_0\in H^3$, $u_0\in H^3$, and the remaining assumptions of Theorem \ref{mr1} hold. Then there exists a unique classical solution to \eqref{ksn1}--\eqref{ksn5}, satisfying  $$n\geq 0,\ c\geq 0,$$ 
\begin{equation} \label{r1}
n \in L_{\infty}(0, T; H^2)\cap L_{2}(0, T; H^3), %\cap W^{1}_{1}(0, T; (W^1_\infty)^*)?,
\end{equation}
\begin{equation} \label{r2}
c \in L_{\infty}(0, T; H^3)\cap L_{2}(0, T; H^4),%\cap H^{1}(0, T; (H^{1})^*)?,
\end{equation}
\begin{equation} \label{r3}
u \in L_{\infty}(0, T; H^3)\cap L_{2}(0, T; H^4).%\cap H^{1}(0, T; (H^{1})^*)?. 
\end{equation}
\end{theorem}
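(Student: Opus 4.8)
The plan is to establish the claimed regularity by the standard strategy of proving higher-order a priori estimates on the approximate solutions constructed in Theorem \ref{mr1}, and then passing to the limit; uniqueness will be obtained by an energy estimate on the difference of two solutions. Since $\Omega=\R^2$ and $m=1$, the $n$-equation is now a genuine parabolic (convection-diffusion) equation, so the usual parabolic bootstrap is available. I begin from the bound \eqref{bound1}, which already gives $n\in L_2(0,T;L_2)\cap L_\infty(0,T;L_1)$, $c\in L_\infty(0,T;L_\infty\cap H^1)\cap L_2(0,T;H^2)$, $u\in L_\infty(0,T;L_2)\cap L_2(0,T;V)$. In two dimensions I would first upgrade $u$: testing the Navier--Stokes equation with $-\Delta u$ and using the two-dimensional Ladyzhenskaya inequality $\|u\|_{L_4}^2\le C\|u\|\,\|\nabla u\|$ together with $n\nabla\phi\in L_2(0,T;L_2)$ (which follows from $n\in L_2 L_2$ and $\nabla\phi\in L_\infty$) yields $u\in L_\infty(0,T;H^1)\cap L_2(0,T;H^2)$. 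With $u\in L_\infty H^1\hookrightarrow L_\infty L_p$ for all $p<\infty$, the drift terms in the $c$- and $n$-equations become benign.

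Next I would bootstrap $c$ and $n$ in tandem. From $c\in L_\infty(0,T;H^1)\cap L_2(0,T;H^2)$ and maximal regularity for the heat equation $\partial_t c-\Delta c=-uc\cdot\nabla\!/\ldots -k(c)n$ (the right-hand side lies in $L_2(0,T;L_2)$ since $u\cdot\nabla c\in L_2 L_2$ by $u\in L_\infty L_4$, $\nabla c\in L_\infty L_2$, wait—more carefully, interpolate using $\nabla c\in L_2 H^1$), one gets $c\in H^1(0,T;L_2)$ and then, differentiating the equation or testing with $\Delta^2 c$, $c\in L_\infty(0,T;H^2)\cap L_2(0,T;H^3)$ provided the source is in $L_\infty H^1\cap L_2 H^1$; the term $k(c)n$ requires $n\in L_\infty(0,T;L_2)$, which I obtain simultaneously from the $n$-equation by testing with $n$: $\frac12\frac{d}{dt}\|n\|^2+\|\nabla n\|^2 = (\chi(c)n\nabla c,\nabla n)+(f(n),n)$, controlling the chemotaxis term by $\|\chi(c)\|_{L_\infty}\|n\|_{L_4}\|\nabla c\|_{L_4}\|\nabla n\|$ and absorbing $\|\nabla n\|$ via Young, using $\|n\|_{L_4}^2\le C\|n\|\|\nabla n\|$ and $\|\nabla c\|_{L_4}^2\le C\|\nabla c\|\|c\|_{H^2}$ — the latter is in $L_2(0,T)$ by the preceding step, and the growth term is linear by $f'\le C$. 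This gives $n\in L_\infty(0,T;L_2)\cap L_2(0,T;H^1)$. Iterating once more: test the $n$-equation with $\Delta^2 n$ (or differentiate in $x$ twice) to reach $n\in L_\infty(0,T;H^2)\cap L_2(0,T;H^3)$, and correspondingly push $c$ to $L_\infty(0,T;H^3)\cap L_2(0,T;H^4)$ and $u$ to $L_\infty(0,T;H^3)\cap L_2(0,T;H^4)$ using the $C^3$-smoothness of $f,\chi,k$, the hypothesis $\nabla\phi\in W^2_\infty$, and the initial regularity $n_0\in H^2$, $c_0\in H^3$, $u_0\in H^3$. These bounds are uniform in the approximation parameter, so a standard compactness/limit argument produces a solution with \eqref{r1}--\eqref{r3}; Sobolev embedding in dimension two then gives classical regularity.

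For uniqueness, let $(n_1,c_1,u_1)$ and $(n_2,c_2,u_2)$ be two such solutions, set $(\bar n,\bar c,\bar u)=(n_1-n_2,c_1-c_2,u_1-u_2)$, write the equations for the differences, and test with $(\bar n,\bar c,\bar u)$ (for $\bar c$ one may need to test with $\bar c-\Delta\bar c$ to close). Every nonlinear term is handled by writing it as a difference, using Lipschitz continuity of $\chi,k,f$ on bounded sets (valid because all quantities are bounded thanks to \eqref{r1}--\eqref{r3} and Sobolev embedding), the two-dimensional Ladyzhenskaya and Gagliardo--Nirenberg inequalities to interpolate the $L_4$ norms, and Young's inequality to absorb the top-order terms $\|\nabla\bar n\|$, $\|\Delta\bar c\|$, $\|\nabla\bar u\|$ into the left-hand side. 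One arrives at $\frac{d}{dt}\big(\|\bar n\|^2+\|\bar c\|_1^2+\|\bar u\|^2\big)\le C(t)\big(\|\bar n\|^2+\|\bar c\|_1^2+\|\bar u\|^2\big)$ with $C\in L_1(0,T)$, and Gronwall finishes the argument since the data coincide.

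The main obstacle is the chemotaxis coupling in the higher-order estimates for $n$: the term $\nabla\cdot(\chi(c)n\nabla c)$ mixes top derivatives of $n$ and $c$, so one must carefully arrange the bootstrap order (first $u$, then $c$ to $H^2$, then $n$ to $L_\infty L^2\cap L_2 H^1$, then alternately up to $H^3$/$H^2$) and use the two-dimensional logarithmic/interpolation inequalities to keep every product term either sub-critical or absorbable. The absence of these estimates in higher dimensions, and the failure of $H^1\hookrightarrow L_\infty$ there, is precisely why the statement is restricted to $\Omega=\R^2$.
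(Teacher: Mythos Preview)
Your proposal is plausible and, with care, should go through, but it takes a genuinely different route from the paper. The paper does \emph{not} perform the full energy bootstrap you outline. Instead, it observes only two elementary inequalities for the growth term,
\[
(\nabla f(n),\nabla n)\le C\|\nabla n\|^2,\qquad (\Delta f(n),\Delta n)\le C\|\Delta n\|^2+C\|\Delta n\|^2\|\nabla n\|,
\]
and then invokes the blow-up criterion of Chae--Kang--Lee \cite{ckl11}: for $f\equiv 0$ the classical solution persists (and is unique, with the regularity \eqref{r1}--\eqref{r3}) as long as $\|\nabla c\|_{L_2(0,T;L_\infty)}<\infty$. The two displayed bounds are precisely what is needed to see that this criterion survives the addition of the growth term. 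The paper then says that verifying $\|\nabla c\|_{L_2(0,T;L_\infty)}<\infty$ is a minor variation of the corresponding argument at the end of Theorem~3 in \cite{ckl11}. So the paper's proof is almost entirely a reduction to \cite{ckl11}, with the new ingredient being only the control of the $f$-terms in the higher energy estimates underlying that reference.

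What your approach buys is self-containment: you never need the external blow-up machinery, and the structure of the argument is transparent. What it costs is length and some delicacy that you gloss over. In particular, the sentence ``Iterating once more: test the $n$-equation with $\Delta^2 n$'' hides a nontrivial intermediate step: from $n\in L_\infty L_2\cap L_2 H^1$ one first needs $n\in L_\infty H^1\cap L_2 H^2$ (and simultaneously $c\in L_\infty H^2\cap L_2 H^3$) before the $H^2$-level estimate for $n$ can close; the chemotaxis term $\nabla\!\cdot(\chi(c)n\nabla c)$ mixes top-order derivatives and the products must be interpolated carefully at each rung. None of this is fatal in $\R^2$, and in fact your bootstrap at the level $c\in L_2 H^3\subset L_2 W^{1,\infty}$ implicitly recovers exactly the quantity $\|\nabla c\|_{L_2 L_\infty}$ that the paper's criterion singles out. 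But if you submit this as a proof you should spell out the intermediate level and the precise product estimates rather than jumping two orders at once.
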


\begin{proof} We observe that \begin{equation} (\nabla\, f(n), \nabla n)=(f'(n) \nabla n, \nabla n)\leq C\|\nabla n\|^2, \end{equation} and \begin{multline} (\Delta \,f(n), \Delta n)= (f'(n)\Delta n, \Delta n) + (f''(n) \nabla n \Delta n, \nabla n) \\ \leq  C(\Delta n, \Delta n) + C\|\Delta n\|\|\nabla n\|_{L_4}^2\leq C\|\Delta n\|^2+C \|\Delta n\|^2\|\nabla n\|. \end{multline} Having this at hand, one may check that the blow-up criterion \begin{equation}\label{blu}\|\nabla c\|_{L_{2}(0, T; L_\infty)}=+\infty\end{equation} proven in \cite{ckl11} for $f\equiv 0$ remains valid in our situation, and at the absence of blow-up, i.e. when \begin{equation}\label{blu1}\|\nabla c\|_{L_{2}(0, T; L_\infty)}<+\infty,\end{equation}  the solution is unique and its regularity is determined by \eqref{r1}-- \eqref{r3}. The argument showing that \eqref{blu1} takes place is a slight variation of the one ending the proof of Theorem 3 in \cite{ckl11}. 
\end{proof}

\section{Attractors} In this section we study the long-time behaviour of problem \eqref{ksn1}--\eqref{ksnb}. We restrict ourselves to the supercritical case (cf. Remark \ref{remattr} below). Since we cannot establish uniqueness of the weak solutions, we treat the question via the theory of trajectory attractors. More precisely, owing mainly to technical convenience, we use our version of the theory \cite[Chapter 4]{book} instead of more classical approaches of Chepyzhov--Vishik \cite{cvb} and Sell \cite{sell}. However, we do not know if the latter ones are applicable to \eqref{ksn1}--\eqref{ksnb}. 

In order to simplify the presentation, we consider the autonomous case $$\nabla \phi \in L_\infty$$ (independent of $t$). 
However, similar results can be obtained in the non-autonomous case via employment of the more involved theory of pullback trajectory attractors developed recently in \cite{jde11}. 

We start with recalling some basic framework from \cite[Chapter 4]{book}. 

Let $E $ and $E_0 $ be Banach spaces, $E\subset E_0 $, $E $ is reflexive.
Fix some set
$$\mathcal {H} ^ + \subset C ([0, + \infty); E_0) \cap L_\infty (0, + \infty; E) $$
of solutions (strong, weak, etc.) for any given autonomous differential equation or boundary value problem. Hereafter, the set $ \mathcal {H} ^ + $ will be
called the \textit{trajectory space} and its elements will be
called  \textit{trajectories}. Generally speaking, the nature of $ \mathcal {H} ^ + $ may be
different from the just described one.

\begin{defn} A set $P\subset $ $C ([0, + \infty ); E_0) \cap L_\infty (0,
+ \infty; E) $ is called \textit{attracting} (for the trajectory
space $ \mathcal {H} ^ + $) if for any set $B\subset \mathcal {H} ^ + $ which is bounded in $ L_\infty (0, +
\infty; E) $, one has
$$\sup\limits _ {u\in B} ^ {} \inf\limits _ {v\in P} ^ {} \|T (h) u-v \| _{C ([0, +\infty);E_0)} \underset {h\to\infty} {\to} 0. $$
\end{defn}
Here $T(h)$ stands for the translation (shift) operator,
$$ T (h) (u) (t) =u (t+h). $$ 

\begin{defn} 
 A set $P\subset $ $C ([0, + \infty ); E_0) \cap L_\infty (0, +
\infty; E) $ is called \textit{absorbing} (for the trajectory space
$ \mathcal {H} ^ + $) if for any set $B\subset \mathcal {H} ^ + $ which is bounded in $ L_\infty (0, +
\infty; E) $,  there is $h \geq 0 $
such that $ T (t) B\subset P$ for all $t \geq h $.
\end{defn}

\begin{defn} A set $ \mathcal {U}\subset $ $C ([0, + \infty ); E_0) \cap L_\infty (0,
+ \infty; E) $ is called the \textit{minimal trajectory attractor} (for
the trajectory space $ \mathcal {H} ^ + $) if

i) $ \mathcal {U} $ is compact in $C ([0, + \infty); E_0) $ and bounded in
$L_\infty (0, + \infty; E) $;

ii) $T (t) \mathcal {U}=  \mathcal {U}$ for any $t \geq 0 $;

iii) $ \mathcal {U} $ is attracting;

iv) $ \mathcal {U} $ is contained in any other set satisfying conditions i), ii), iii).
\end{defn}

\begin{defn} A set $ \mathcal {A} \subset E $ is called the \textit{global
attractor} (in $E_0 $) for the trajectory space $ \mathcal {H}
^ + $ if

i) $ \mathcal {A} $ is compact in $E_0 $ and bounded in $E $;

ii) for any bounded in $ L_\infty (0, + \infty; E) $ set $B\subset
\mathcal {H} ^ + $ the attraction property is fulfilled:
$$\sup\limits _ {u\in B} ^ {} \inf\limits _ {v\in \mathcal {A}} ^ {} \|u (t)-v \| _ {E_0} \underset {t\to\infty} {\to} 0; $$

iii) $ \mathcal {A} $ is the minimal set satisfying conditions i)
and ii) (that is, $ \mathcal {A} $ is contained in every set
satisfying conditions i) and ii)). \end{defn}

\begin{prop} \label{415} Assume that there exists an absorbing set $P $ for
the trajectory space $ \mathcal {H} ^ + $, which is relatively compact in $C ([0, + \infty); E_0) $
and bounded in $L_\infty (0, + \infty; E) $. Then there exists a
minimal trajectory attractor $ \mathcal {U} $ for the trajectory
space $ \mathcal {H} ^ + $. \end{prop}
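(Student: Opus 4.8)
The plan is to build $\mathcal{U}$ as the closure of a union of $\omega$-limit sets trapped inside $P$. First I would pass to the closure of $P$ in $C([0,+\infty);E_0)$, which is legitimate: enlarging $P$ preserves the absorbing property, the closure is compact (not merely relatively compact), and it is still bounded in $L_\infty(0,+\infty;E)$ because that norm is lower semicontinuous along convergence in $C([0,+\infty);E_0)$ — this is the one place the reflexivity of $E$ is used, via weak compactness on each time slice $\{t\}$. So assume from now on that $P$ is compact in $C([0,+\infty);E_0)$ and bounded in $L_\infty(0,+\infty;E)$. For every $B\subset\mathcal{H}^+$ bounded in $L_\infty(0,+\infty;E)$ fix, by the absorbing property, $h_B\geq0$ with $T(t)B\subset P$ for $t\geq h_B$, and set
\[
\omega(B)=\bigcap_{s\geq h_B}\overline{\bigcup_{t\geq s}T(t)B}
\]
(closures in $C([0,+\infty);E_0)$). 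As $\bigcup_{t\geq s}T(t)B\subset P$ for $s\geq h_B$, the sets being intersected are nested nonempty closed subsets of the compact $P$, so $\omega(B)$ is a nonempty compact subset of $P$; the usual $\omega$-limit argument — the orbit $\{T(h)u:h\geq h_B,\,u\in B\}$ lying in the compact $P$ — shows that $\omega(B)$ attracts $B$.

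Next I would put
\[
\mathcal{U}=\overline{\bigcup_B\omega(B)},
\]
the union over all $L_\infty(0,+\infty;E)$-bounded $B\subset\mathcal{H}^+$, closure in $C([0,+\infty);E_0)$ (if no such nonempty $B$ exists then $\mathcal{H}^+=\varnothing$ and $\mathcal{U}=\varnothing$ works trivially). Since every $\omega(B)\subset P$, we get $\mathcal{U}\subset P$, so $\mathcal{U}$ is compact in $C([0,+\infty);E_0)$ and bounded in $L_\infty(0,+\infty;E)$ — this is i). Property iii) follows at once: for bounded $B$, $\omega(B)\subset\mathcal{U}$ gives $\mathrm{dist}(T(h)u,\mathcal{U})\leq\mathrm{dist}(T(h)u,\omega(B))$, and the right-hand side goes to $0$ uniformly over $u\in B$.

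For ii), $T(\tau)\mathcal{U}=\mathcal{U}$ ($\tau\geq0$): the inclusion ``$\subset$'' comes from continuity of $T(\tau)$ on $C([0,+\infty);E_0)$ together with $T(\tau)\omega(B)\subset\omega(B)$, which is visible from the definition of $\omega(B)$ since the intersection ranges over arbitrarily large $s$, so the index shift produced by the translation is absorbed. The reverse inclusion is the standard backward completeness of $\omega$-limit sets under asymptotic compactness: for $v\in\omega(B)$ pick $t_j\to\infty$, $u_j\in B$ with $T(t_j)u_j\to v$; for large $j$ one has $T(t_j-\tau)u_j\in P$, so along a subsequence $T(t_j-\tau)u_j\to v'\in\omega(B)\subset\mathcal{U}$, and by continuity of $T(\tau)$ one gets $T(\tau)v'=\lim_j T(t_j)u_j=v$; a final closure argument handles a general $v\in\mathcal{U}$. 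Finally, minimality iv): if $\mathcal{U}'$ satisfies i)--iii) and $v\in\omega(B)$, again write $T(t_j)u_j\to v$ with $t_j\to\infty$, $u_j\in B$; the attraction property of $\mathcal{U}'$ applied to the bounded set $B$ gives $\mathrm{dist}(T(t_j)u_j,\mathcal{U}')\leq\sup_{u\in B}\mathrm{dist}(T(t_j)u,\mathcal{U}')\to0$, hence $v\in\overline{\mathcal{U}'}=\mathcal{U}'$; thus $\bigcup_B\omega(B)\subset\mathcal{U}'$ and therefore $\mathcal{U}\subset\mathcal{U}'$.

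The obstacle here is structural rather than computational. The two points requiring care are: the reduction to a genuinely compact $P$ — that is, the lower semicontinuity of the $L_\infty(0,+\infty;E)$-norm under local uniform convergence, where reflexivity of $E$ is exactly what is needed — and the recognition that, since $\mathcal{H}^+$ is assumed neither bounded nor translation invariant, one must assemble $\mathcal{U}$ from the $\omega$-limit sets of all $L_\infty$-bounded subsets of $\mathcal{H}^+$ at once, while still keeping everything inside the compact absorbing set $P$ so that i) holds. Of the verifications, the backward inclusion in ii) is the only one that genuinely exploits the compactness furnished by $P$, and the minimality argument is what confirms that this union is the correct, smallest possible, choice.
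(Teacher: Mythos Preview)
The paper does not prove this proposition; it is quoted without proof as part of the background material recalled from \cite[Chapter 4]{book}, so there is no in-paper argument to compare against. Your construction --- pass to the closure of $P$ using reflexivity of $E$ to retain the $L_\infty(0,+\infty;E)$ bound, build $\mathcal{U}$ as the closure of the union of $\omega$-limit sets of all $L_\infty$-bounded $B\subset\mathcal{H}^+$, and verify i)--iv) by the standard compactness-and-translation arguments --- is the expected route and is correct; it is in fact the argument one finds in the cited reference.
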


\begin {prop} \label{416} If there exists a minimal
trajectory attractor $ \mathcal {U} $ for the trajectory space $
\mathcal {H} ^ + $, then there is a global attractor $ \mathcal
{A} $ for the trajectory space $ \mathcal {H} ^ + $, and for all $t\geq 0 $ one has $ \mathcal {A} =\{\xi (t) |\xi\in\mathcal {U} \}. $
\end {prop}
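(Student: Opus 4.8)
The plan is to take $\mathcal A$ to be the common time-section of the minimal trajectory attractor and to verify the three defining properties of a global attractor one at a time, the last of which is the genuine difficulty. First I set $\mathcal A:=\{\xi(0)\mid\xi\in\mathcal U\}$ and observe that the invariance $T(t)\mathcal U=\mathcal U$ forces $\mathcal A=\{\xi(t)\mid\xi\in\mathcal U\}$ for every $t\ge 0$: indeed $\xi(t)=(T(t)\xi)(0)$, and as $\xi$ runs over $\mathcal U$ the element $T(t)\xi$ runs over $T(t)\mathcal U=\mathcal U$, so the two sets coincide. This yields the asserted formula for $\mathcal A$ at all $t$ simultaneously.

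For the compactness/boundedness property I argue as follows. Evaluation at the initial instant $\xi\mapsto\xi(0)$ is continuous from $C([0,+\infty);E_0)$ into $E_0$ (Frechet convergence entails uniform convergence on $[0,1]$, hence convergence of the values at $0$), so $\mathcal A$, being the image of the $C([0,+\infty);E_0)$-compact set $\mathcal U$, is compact in $E_0$. For boundedness in $E$, let $R$ bound $\mathcal U$ in $L_\infty(0,+\infty;E)$ and fix $a=\xi(0)\in\mathcal A$. Choosing $t_k\downarrow 0$ with $\|\xi(t_k)\|_E\le R$, I use the reflexivity of $E$ to extract a subsequence converging weakly in $E$; since $\xi(t_k)\to\xi(0)$ in $E_0$ and $E\hookrightarrow E_0$ continuously, the weak $E$-limit must equal $\xi(0)$, and weak lower semicontinuity of the norm gives $\|a\|_E\le R$. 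Thus $\mathcal A$ is bounded in $E$.

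The attraction property follows by restricting the trajectory attraction of $\mathcal U$ to the initial instant. For a set $B\subset\mathcal H^+$ bounded in $L_\infty(0,+\infty;E)$, the attracting property of $\mathcal U$ gives $\sup_{u\in B}\inf_{w\in\mathcal U}\|T(h)u-w\|_{C([0,+\infty);E_0)}\to 0$ as $h\to\infty$; since this pre-norm controls the supremum norm on $[0,1]$ and $\|T(h)u-w\|_{C([0,1];E_0)}\ge\|u(h)-w(0)\|_{E_0}$, I obtain $\sup_{u\in B}\inf_{w\in\mathcal U}\|u(h)-w(0)\|_{E_0}\to 0$. As $\{w(0)\mid w\in\mathcal U\}=\mathcal A$, this reads $\sup_{u\in B}\inf_{v\in\mathcal A}\|u(h)-v\|_{E_0}\to 0$, which is exactly the required attraction with $t=h$.

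The main obstacle is the minimality of $\mathcal A$, and here I intend to convert the minimality of $\mathcal U$ into that of $\mathcal A$. Let $\mathcal A'$ be compact in $E_0$, bounded in $E$, and attracting; I must show $\mathcal A\subseteq\mathcal A'$. Fix $a\in\mathcal A$; by the invariance established above, for every $t\ge 0$ there is $\eta_t\in\mathcal U$ with $\eta_t(t)=a$ (surjectivity of $T(t)$ on $\mathcal U$). The goal is to feed these $\eta_t$ into the attraction property of $\mathcal A'$ so as to get $\mathrm{dist}_{E_0}(a,\mathcal A')\le\sup_{u\in B_0}\inf_{v\in\mathcal A'}\|u(t)-v\|_{E_0}+o(1)\to 0$ for a fixed bounded $B_0\subset\mathcal H^+$, whence $a\in\overline{\mathcal A'}^{\,E_0}=\mathcal A'$. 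The delicate point is that the attraction of $\mathcal A'$ is postulated only for genuine trajectory families $B\subset\mathcal H^+$, whereas $\eta_t$ lies in $\mathcal U$. I will bridge this either by approximating each $\eta_t$, uniformly on compact time-intervals, by elements of a fixed bounded $B_0\subset\mathcal H^+$ (using the attracting property of $\mathcal U$ together with the $C([0,+\infty);E_0)$-compactness of $\mathcal U$) and passing to the limit, or, equivalently, by assembling from $\mathcal A'$ an attracting, translation-invariant set that is compact in $C([0,+\infty);E_0)$ and invoking the minimality of $\mathcal U$ against it. Securing the requisite equicontinuity and compactness in that auxiliary step is where the real work lies; once it is in place, the inclusion $\mathcal A\subseteq\mathcal A'$ follows, establishing minimality and thereby identifying $\mathcal A$ as the global attractor with $\mathcal A=\{\xi(t)\mid\xi\in\mathcal U\}$.
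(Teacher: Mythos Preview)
The paper does not prove this proposition; it is quoted from \cite[Chapter~4]{book} as part of the ``basic framework'' recalled at the start of Section~5, so there is no in-paper argument to compare against. I therefore comment on your plan on its own merits.

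Your treatment of the section formula $\mathcal A=\{\xi(t):\xi\in\mathcal U\}$ via $T(t)\mathcal U=\mathcal U$, the compactness of $\mathcal A$ in $E_0$ (continuity of evaluation), the boundedness of $\mathcal A$ in $E$ (reflexivity and weak lower semicontinuity), and the attraction property (restricting the Fr\'echet-metric attraction to the value at $0$) are all correct.

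For the minimality, the gap you flag is genuine, but your second route can actually be completed abstractly, without extra equicontinuity. Given $\mathcal A'$ compact in $E_0$, bounded in $E$, and attracting, set
\[
\mathcal V:=\{\xi\in\mathcal U:\ \xi(t)\in\mathcal A'\ \text{for all }t\ge 0\}.
\]
This is closed in $\mathcal U$, hence compact in $C([0,\infty);E_0)$ and bounded in $L_\infty(0,\infty;E)$. It is attracting: if not, pick bounded $B\subset\mathcal H^+$, $\epsilon>0$, $h_k\to\infty$, $u_k\in B$ with $\mathrm{dist}_C(T(h_k)u_k,\mathcal V)\ge\epsilon$; by attraction of $\mathcal U$ and its compactness, along a subsequence $T(h_k)u_k\to w\in\mathcal U$ in $C([0,\infty);E_0)$, and since $u_k(h_k+t)\to w(t)$ in $E_0$ while $\mathrm{dist}_{E_0}(u_k(h_k+t),\mathcal A')\to 0$, one gets $w(t)\in\mathcal A'$ for all $t$, i.e.\ $w\in\mathcal V$, a contradiction. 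Invariance of $\mathcal V$ is not automatic, but $T(t)\mathcal V\subset\mathcal V$, so the sets $T(t)\mathcal V$ are decreasing and compact, and
\[
\mathcal V^\ast:=\bigcap_{t\ge 0}T(t)\mathcal V
\]
is compact, bounded, and satisfies $T(s)\mathcal V^\ast=\mathcal V^\ast$. Repeating the contradiction argument with an extra shift (apply it to $T(h_k-s)u_k$ for each fixed $s$ to produce $w_s\in\mathcal V$ with $T(s)w_s=w$) shows that $\mathcal V^\ast$ is attracting as well. Condition iv) then gives $\mathcal U\subset\mathcal V^\ast\subset\mathcal V\subset\mathcal U$, hence $\mathcal U=\mathcal V$ and $\mathcal A\subset\mathcal A'$. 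This fills the step you left open and avoids having to import the $\omega$-limit description of $\mathcal U$ from the proof of Proposition~\ref{415}.
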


Hereafter, we make the following assumptions. 

a) $\Omega$ is bounded.

b) $m>\frac {d+1} 3$. 

c) $\phi\in L_1,\ \nabla \phi\in L_\infty$.

d) $k,\chi$ and $f$ are continuously differentiable functions, $\chi'\geq 0$, $k\geq 0$, $k(0)= 0$.

e) The initial concentration of oxygen does not exceed some constant $c_\mathcal{O}$. This unusual assumption is necessary for the presence of a compact attractor, at least when $f(0)=0$. Indeed, without an assumption of this kind no compact attractor may exist due to the presence of steady-state solutions $(n\equiv 0, c\equiv c_0, u\equiv 0)$ with arbitrarily large constants $c_0$ independent of $x$.  An alternative (which we do not like) is to fix the initial oxygen concentration, and to only let $n_0$ and $u_0$ vary. 

f) There exists a positive number $\gamma$ so that
\begin{equation}\label{asfs}f(y)+2\gamma y \leq C,\ y\geq 0,\end{equation} 
\begin{equation}\label{lam1} 2 \gamma\leq K_1,\end{equation}
and 
\begin{equation}\label{lam}4 \gamma\|u\|^2\leq \|\nabla u\|^2,\ u\in V.\end{equation}

Let us specify the class of solutions to \eqref{ksn1}--\eqref{ksnb} to be considered within this section. 

\begin{defn} A triple $(n,\, c,\, u)\in L_{\infty}(0, +\infty; L_1 \times H^1 \times H)$ 
is an \textit{admissible weak solution} to problem \eqref{ksn1}--\eqref{ksnb}
if it is a weak solution on each bounded interval $[0,T]$, and it satisfies the inequalities \begin{multline}\label{ener} \|n\|_{L_{\infty}(t,t+1;L_1)}+\|n\ln n\|_{L_{\infty}(t,t+1;L_1)} \\ +\|n\|_{L_{\infty}(t,t+1;L_{\max(1,m/2)})}^{\max(1,m/2)}+\|c\|^2_{L_{\infty}(t,t+1;H^1)}+\|u\|^2_{L_{\infty}(t,t+1;H)} \\ +\|n\|^2_{L_{2}(t,t+1;L_2)}+\|n^{[\max(4,3m-2)]/2}\|_{L_{1}(t,t+1;L_1)}+\|c\|_{L_{2}(t,t+1;H^2)}^2+\|u\|_{L_{2}(t,t+1;V)}^2 \\ \leq \Gamma[1+e^{-\gamma t}(\|n(0)\|_{L_1}+\|n(0)\ln n(0)\|_{L_1} \\ +\|n(0)\|_{L_{\max(1,m/2)}}^{\max(1,m/2)}+\|c(0)\|^2_1+\|u(0)\|^2)],\end{multline} 
\begin{equation}\label{vol} \|c(t)\|_{L_{\infty}} \leq c_\mathcal{O} \end{equation} for all $t\geq \frac{\ln (\|n_0\|_{L_1})}\gamma$, where $\Gamma$ is a certain constant depending on $\nabla \phi$, $k$, $\chi$, $f$, $c_\mathcal{O}$, $\gamma$ and $m$ (it will be defined during the proof of Theorem \ref{wsol1}).
\end{defn}

As the following proposition shows, the class of admissible weak solutions is sufficiently wide.

\begin{theorem} \label{wsol1} Let $(n_0,c_0,u_0)$ be as in Theorem \ref{mb1}, and $c_0 \leq c_\mathcal{O}$. Then there exists an admissible weak solution to \eqref{ksn1}--\eqref{ksnb} satisfying the initial condition \eqref{ksn5}. \end{theorem}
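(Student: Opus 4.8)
The plan is to upgrade the a priori bounds already obtained in Theorem \ref{mb1} to dissipative (uniformly-in-time, exponentially decaying memory) estimates, and then to reproduce the approximation/compactness scheme of Theorem \ref{mb1} on each finite interval, checking that the resulting weak solution inherits \eqref{ener} and \eqref{vol}. The technical engine is the same differential-inequality machinery as in the proof of Theorem \ref{mb1}, but now we must keep track of a genuine dissipation term of the form $2\gamma(\text{energy})$ on the left, rather than absorbing everything into Gronwall.

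First I would revisit \eqref{vol}. Since $k\geq 0$ and $u$ is divergence-free with $u|_{\partial\Omega}=0$, the maximum-principle computation that gave \eqref{bound3}--\eqref{bound4} actually gives $\|c(t)\|_{L_\infty}\le\|c_0\|_{L_\infty}\le c_\mathcal{O}$ for \emph{all} $t\ge0$ — in fact the hypothesis $c_0\le c_\mathcal{O}$ makes \eqref{vol} immediate with the constant on the right of \eqref{vol} already sharp; the restriction $t\ge\gamma^{-1}\ln\|n_0\|_{L_1}$ in the definition is harmless and will actually be needed only to make the $L_1$-norm of $n$ small. Next, from \eqref{weakl1} and assumption \eqref{asfs}, $\frac{d}{dt}\|n(t)\|_{L_1}+2\gamma\|n(t)\|_{L_1}\le C|\Omega|$, so Gronwall gives $\|n(t)\|_{L_1}\le e^{-2\gamma t}\|n_0\|_{L_1}+C$, which is the dissipative replacement for \eqref{bound2}; in particular $\|n(t)\|_{L_1}\le 1$ once $t\ge\gamma^{-1}\ln\|n_0\|_{L_1}$ (up to the additive constant), explaining that threshold. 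The $L^2(t,t+1;L_2)$ bound on $n$ then follows by integrating \eqref{boun1su}-type inequalities over unit windows using \eqref{asf2} — but wait, in the supercritical regime we instead use \eqref{n4}, so the $\|n\|_{L_2}^2$ and $\|n^{m/2}\|_{L_2}^2$ control comes out of the $\|\nabla n^{m/2}\|^2$ term exactly as in \eqref{n4}, integrated over $[t,t+1]$.

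The main step is the dissipative version of \eqref{weai4}. Redo the computation leading to \eqref{weai3}, but this time: (i) keep the term $(f(n),1+\ln n)$ and use \eqref{asfs} together with \eqref{f1} to produce $-2\gamma\|n\ln n\|_{L_1}$ plus controllable remainders on the right; (ii) use \eqref{lam} to convert part of $K_2\|\nabla u\|^2$ into $2\gamma K_2\|u\|^2$; (iii) use \eqref{lam1} together with the Poincaré-type inequality \eqref{pineq} and the $L_2$-decay of $c$ (which itself follows by testing \eqref{weak2} with $c$ and using $k\ge0$: $\frac{d}{dt}\|c\|^2+2\|\nabla c\|^2\le0$, hence with a Poincaré inequality on bounded $\Omega$, $\|\nabla c(t)\|$ stays bounded and $\|c(t)\|^2$ decays) to extract $2\gamma\|\nabla c\|^2$. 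The outcome is an inequality of the shape
\[
\frac{d}{dt}\Phi(t)+2\gamma\,\Phi(t)+\big(\text{good terms}\big)\le C,
\]
where $\Phi=\|\nabla c\|^2+\int_\Omega n\ln n+3\|\langle\cdot\rangle n\|_{L_1}+K_2\|u\|^2$ and the good terms are $K_1\|c\|_2^2$, $K_2\|\nabla u\|^2$, $\frac2m\|\nabla n^{m/2}\|^2$, $\|[f(n)\ln n]_-\|_{L_1}$. A differential form of Gronwall gives $\Phi(t)\le e^{-2\gamma t}\Phi(0)+C$, and integrating the inequality over $[t,t+1]$ absorbs the good terms, using \eqref{intm} to pass between $\int n\ln n$ and $\|n\ln n\|_{L_1}$ and \eqref{n4} to finish the $\|n\|_{L_2}$, $\|n^{m/2}\|_{L_2}$ and $\|\nabla n^{m/2}\|_{L_2}$ bounds; the $n^{(3m-2)/2}$-type term comes from the extra estimate \eqref{boundbm}–\eqref{n10}, redone dissipatively by testing \eqref{weak1} with $n^{(m-2)/2}$ and invoking \eqref{asfs} once more when $m>2$ (for $m\le2$ use \eqref{bound3su}-type interpolation). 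This gives precisely the left side of \eqref{ener}, with the exponential factors $e^{-\gamma t}$ arising from $e^{-2\gamma t}\le e^{-\gamma t}$ after taking square roots and the constant $\Gamma$ being the accumulated $C$.

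Finally, the existence part: run the same Galerkin/regularization-and-pass-to-the-limit argument as in Theorem \ref{mb1} on $[0,T]$ for each $T$, noting that all the estimates above are \emph{uniform in the approximation parameter} $N$ and in $T$, so the limit solution is a weak solution on every $[0,T]$ and satisfies \eqref{ener}–\eqref{vol}; since the bounds are uniform in $T$ one gets a single solution on $[0,+\infty)$ by a diagonal argument, and it lies in $L_\infty(0,+\infty;L_1\times H^1\times H)$ as required by the definition of admissible weak solution. The main obstacle, as usual in this circle of ideas, is item (iii) above — squeezing a clean $2\gamma\|\nabla c\|^2$ dissipation out of \eqref{wea21}/\eqref{weai1} while the cross term $\sum(\partial_j u_i,\,c\,\partial^2_{ij}c)$ and the $(k(c)n,\Delta c)$ term are only controlled by Cauchy-with-$\varepsilon$ against $\|c\|_2^2$ and $\|\nabla u\|^2$; one has to choose $K_1,K_2$ and the $\varepsilon$'s so that after using \eqref{lam1} and \eqref{lam} there is still a strictly positive multiple of $\|\nabla u\|^2$ and $\|c\|_2^2$ left over, which is exactly what assumptions \eqref{lam1}–\eqref{lam} are calibrated to guarantee. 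The other delicate point is verifying that the strong-convergence arguments for $f_N(n_N)$ and for $\nabla(n_N^m)$ go through unchanged — they do, since those rely only on \eqref{bound1}-type bounds which we have re-proved in dissipative form.
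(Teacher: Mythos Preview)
Your proposal is essentially the paper's own argument: derive a dissipative differential inequality for the combined energy--entropy functional using \eqref{asfs}, \eqref{lam}, \eqref{lam1} and the supercritical interpolation \eqref{n4}, integrate over unit time windows, handle the $n^{(3m-2)/2}$ term via a dissipative version of \eqref{wea1bm}, and pass to the limit as in Theorem~\ref{mb1}. Two small corrections: the entropy dissipation coming from \eqref{asfs} is only $\gamma\int_\Omega n\ln n$ (via $(f(y)+\gamma y)\ln y\le C$), not $2\gamma\|n\ln n\|_{L_1}$, which is why the paper uses the integrating factor $e^{\gamma t}$ rather than $e^{2\gamma t}$ --- this still gives exactly the $e^{-\gamma t}$ rate in \eqref{ener}; and your parenthetical about the $L_2$-decay of $c$ is a red herring --- the $\gamma\|\nabla c\|^2$ term is extracted directly from the $2K_1\|c\|_2^2$ in \eqref{weai1} via $\|c\|_2^2\ge\|\nabla c\|^2$ and \eqref{lam1}.
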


\begin{proof} It suffices to formally establish \eqref{ener} and \eqref{vol} for the solutions of \eqref{ksn1}--\eqref{ksnb}, and to pass to the limit as in the proof of Theorem \ref{mb1}.  

Inequality \eqref{vol} is straightforward, giving also \eqref{bound5}.

 As a consequence of \eqref{asfs}, we have \begin{equation} \label{asssf}(f(y)+\gamma y)\ln y \leq C,\ y\geq 0 , \end{equation} and
 \begin{equation} \label{asssf1}(f(y)+\gamma y)y^p \leq C,\ y\geq 0,\end{equation} for any fixed $p>0$.

 We deduce from \eqref{weakl1} that \begin{equation}\label{wat1} \frac d {dt} \|n(t)\|_{L_{1}} +\gamma \|n(t)\|_{L_{1}} \leq C,\end{equation} so \begin{equation}\label{wat2}\|n(t)\|_{L_{1}} \leq C+e^{-\gamma t}\|n_0\|_{L_1}. \end{equation}
 
For $t\geq \frac{\ln (\|n_0\|_{L_1})}\gamma$, we have 
 \begin{equation}\label{watc}\|n(t)\|_{L_{1}} \leq C. \end{equation}

Formulas \eqref{wea11} and \eqref{asssf} imply
\begin{multline}\label{wat3}
\frac {d}{dt} \int\limits_{\Omega} n\ln n \, dx + \frac 4 m \|\nabla(n^{m/2})\|^2 +\gamma \int\limits_{\Omega} n\ln n \\ \leq C+ K_1 \|c\|^2_2+ K_{4} \|n\|^2,
\end{multline}
whereas \eqref{wea3} gives
\begin{equation}\label{wat4}
\frac 1 2  \frac {d}{dt} \|u\|^2 + \|\nabla u\|^2 \leq K_9\|n\|^2+\frac \gamma 2 \|u\|^2.
\end{equation}

Multiply \eqref{wat4} by $2K_2 e^{\gamma t}$ and add with \eqref{weai1} and \eqref{wat3} multiplied by $e^{\gamma t}$:
\begin{multline}\label{wat5}
\frac {d}{dt} [e^{\gamma t} \|\nabla c(t)\|^2] + \frac {d}{dt} \int\limits_{\Omega} e^{\gamma t} n(t,x)\ln n(t,x) \, dx+  K_2\frac {d}{dt} [e^{\gamma t} \|u(t)\|^2] \\ -\gamma e^{\gamma t} \|\nabla c(t)\|^2-K_2 \gamma e^{\gamma t} \|u(t)\|^2 \\ +K_1 e^{\gamma t}  \|c(t)\|^2_2 + K_2 e^{\gamma t}  \|\nabla u(t)\|^2+  \frac {4 e^{\gamma t}} m \|\nabla(n^{m/2})(t)\|^2 \\ \leq C e^{\gamma t} +K_{10} e^{\gamma t}  \|n(t)\|^2+K_2 \gamma e^{\gamma t} \|u(t)\|^2.
\end{multline}

Similarly to \eqref{n4}, we see that \begin{equation}\label{wat6}
\|n(t)\|^2 \leq \frac 2 {K_{10} m} \|\nabla(n^{m/2}(t))\|^2 + C.
\end{equation}

Taking into account \eqref{lam} and \eqref{lam1}, we conclude that 
\begin{multline}\label{wat7}
\frac {d}{dt} [e^{\gamma t} \|\nabla c(t)\|^2] + \frac {d}{dt} \int\limits_{\Omega} e^{\gamma t} n(t,x)\ln n(t,x) \, dx+  K_2\frac {d}{dt} [e^{\gamma t} \|u(t)\|^2] \\ +\frac {K_1 e^{\gamma t} } 2  \|c(t)\|^2_2 + \frac {K_2  e^{\gamma t} } 2 \|\nabla u(t)\|^2+  \frac {2 e^{\gamma t}} m \|\nabla(n^{m/2})(t)\|^2 \leq C e^{\gamma t}.
\end{multline}

Integration in time implies 

\begin{multline}\label{wat8}
e^{\gamma h} \|\nabla c(h)\|^2 + \int\limits_{\Omega} e^{\gamma h} n(h,x)\ln n(h,x) \, dx+  K_2 [e^{\gamma h} \|u(h)\|^2] \\ +\int\limits_0^h \frac {K_1 e^{\gamma t} } 2  \|c(t)\|^2_2 \, dt+ \int\limits_0^h \frac {K_2  e^{\gamma t} } 2 \|\nabla u(t)\|^2\, dt+  \int\limits_0^h \frac {2 e^{\gamma t}} m \|\nabla(n^{m/2})(t)\|^2\, dt \\ \leq C\int\limits_0^h  e^{\gamma t}\, dt + \|c_0\|^2_1 + \|n_0\ln n_0\|_{L_1}+  K_2 \|u_0\|^2.
\end{multline}

Therefore, 
\begin{multline}\label{wat9}
 \|\nabla c(h)\|^2 + \int\limits_{\Omega} n(h,x)\ln n(h,x) \, dx+  K_2 \|u(h)\|^2 \\ \leq C+ e^{-\gamma h} (\|c_0\|^2_1 + \|n_0\ln n_0\|_{L_1}+  K_2 \|u_0\|^2).
\end{multline}

This inequality,  \eqref{vol}, \eqref{wat2} and \eqref{intm} yield
\begin{multline}\label{wat10}
 \|c(h)\|_1^2 + \|n(h)\ln n(h)\|_{L_1}+ \|u(h)\|^2 \\ \leq C(1+e^{-\gamma h} (\|c_0\|^2_1 +\|n_0\|_{L_1} +\|n_0\ln n_0\|_{L_1}+  \|u_0\|^2)).
\end{multline}

Integrating \eqref{wat7} from $h$ to $h+1$, we find

\begin{multline}\label{wat11}
e^{\gamma (h+1)} \|\nabla c(h+1)\|^2 + \int\limits_{\Omega} e^{\gamma (h+1)} n(h+1,x)\ln n(h+1,x) \, dx+  K_2 [e^{\gamma (h+1)} \|u(h+1)\|^2] \\ +\int\limits_h^{h+1} \frac {K_1 e^{\gamma t} } 2  \|c(t)\|^2_2 \, dt+\int\limits_h^{h+1} \frac {K_2  e^{\gamma t} } 2 \|\nabla u(t)\|^2\, dt+  \int\limits_h^{h+1} \frac {2 e^{\gamma t}} m \|\nabla(n^{m/2})(t)\|^2\, dt \\ \leq C\int\limits_h^{h+1}  e^{\gamma t}\, dt + \|c(h)\|^2_1 + \|n(h)\ln n(h)\|_{L_1}+  K_2 \|u(h)\|^2.
\end{multline}

Due to \eqref{intm}, \eqref{wat6} and \eqref{wat10}, we arrive at

\begin{multline}\label{wat12}
e^\gamma \|\nabla c(h+1)\|^2 + e^\gamma \|n(h+1)\ln n(h+1)\|_{L_1}+  K_2 e^\gamma \|u(h+1)\|^2 \\ +\frac {K_1 } 2 \int\limits_h^{h+1}   \|c(t)\|^2_2 \, dt+ \frac {K_2 } 2 \int\limits_h^{h+1} \|\nabla u(t)\|^2\, dt + {K_{10} }  \int\limits_h^{h+1}  \|n\|^2\, dt \\ \leq C(1+e^{-\gamma h} (\|c_0\|^2_1 +\|n_0\|_{L_1} +\|n_0\ln n_0\|_{L_1}+  \|u_0\|^2)).
\end{multline}

Let $m > 2$. Then \eqref{wea1bm} and \eqref{asssf1} imply
\begin{multline}\label{wat13}
\frac 2 m \,\frac {d}{dt}\|n\|_{L_{m/2}}^{m/2}+\gamma  \|n\|_{L_{m/2}}^{m/2}+ \frac {8m(m-2)}{(3m-2)^2} \|\nabla(n^{\frac {3m-2}4})\|^2 \\ \leq C(1+\|\nabla c\|^2+\|\nabla (n^{m/2})\|^2).
\end{multline}
This, \eqref{wat8} and  \eqref{intm} yield (by \cite[p. 35]{cvb}) 
\begin{multline}\label{wat14}
\|n(h)\|_{L_{m/2}}^{m/2}\leq  e^{-\gamma m h/2} \|n_0\|_{L_{m/2}}^{m/2} + C \int\limits_0^{h}e^{\gamma m (t-h)/2}(1+\|c(t)\|_2^2+\|\nabla (n^{m/2})(t)\|^2)\,dt \\ \leq e^{-\gamma h} \|n_0\|_{L_{m/2}}^{m/2} + C \int\limits_0^{h}e^{\gamma (t-h)}(1+\|c(t)\|_2^2+\|\nabla (n^{m/2})(t)\|^2)\,dt \\ \leq C(1+e^{-\gamma h} (\|n_0\|_{L_{m/2}}^{m/2} +\|c_0\|^2_1 +\|n_0\|_{L_1}+\|n_0\ln n_0\|_{L_1}+  \|u_0\|^2)).
\end{multline}
Now, integration of \eqref{wat13} from $h$ to $h+1$ gives
\begin{multline}\label{wat15} \int\limits_h^{h+1} \|\nabla(n^{\frac {3m-2}4})(t)\|^2\, dt \\ \leq C(1+e^{-\gamma h} (\|n_0\|_{L_{m/2}}^{m/2} +\|c_0\|^2_1 +\|n_0\|_{L_1} +\|n_0\ln n_0\|_{L_1}+  \|u_0\|^2)).\end{multline}
Similarly to \eqref{n10}, we deduce
\begin{multline}\label{wat16} \int\limits_h^{h+1} \|n^{(3m-2)/2}(t)\|_{L_{1}}\, dt \\ \leq C(1+e^{-\gamma h} (\|n_0\|_{L_{m/2}}^{m/2} +\|c_0\|^2_1 +\|n_0\|_{L_1} +\|n_0\ln n_0\|_{L_1}+  \|u_0\|^2)).\end{multline}

In view of \eqref{wat2}, \eqref{wat10}, \eqref{wat12}, \eqref{wat14}, \eqref{wat16} and \eqref{lam}, there exists $\Gamma$ such that \eqref{ener} holds true. 
\end{proof}

We are going to construct the minimal trajectory attractor
and the global attractor for problem \eqref{ksn1}--\eqref{ksnb}. In the sequel, we assume that \begin{equation}\label{asf3} |f(y)|\leq C(y^m + 1),\ y\geq 0, \end{equation} and $m>2.$ It seems that other supercritical values of $m$ can also be treated, even without \eqref{asf3}, although $m=2$ may be troublesome.  For this purpose, one should observe that the major part of the considerations in \cite[Chapter 4]{book} and \cite{jde11} remains valid for non-reflexive $E$. 

We let  $$E=L_{m/2} \times H^1 \times H$$ and
$$E_0=W_{m/2}^{-\delta} \times H^{1-\delta} \times V_\delta ^*,$$ where $ \delta \in (0,1] $ is a fixed
number. The trajectory space $ \mathcal {H}
^ + $ is the set of all admissible weak solutions to \eqref{ksn1}--\eqref{ksnb}. It is contained in $L_\infty (0, + \infty; E) $. Moreover, without loss of generality we may assume that it is contained in $C ([0, + \infty); (W^1_\infty)^* \times (H^{1})^* \times V^*) $. By the Lions-Magenes lemma \cite[Lemma 2.2.6]{book}, $L_\infty (0, + \infty; E) \cap C ([0, + \infty); (W^1_\infty)^* \times (H^{1})^* \times V^*) \subset C_w ([0, + \infty); E)$.  Since the embedding $E\subset E_0$ is compact, $ \mathcal {H}
^ + $ lies in $C ([0, + \infty); E_0) $. 

\begin{lemma}The time derivatives of admissible weak solutions satisfy the estimate 
\begin{multline}\label{difest}\|n'\|_{L_{\frac 32 - \frac 1m}(t,t+1;W_1^{-2})}+\|c'\|_{L_{2}(t,t+1;(H^1)^*)}^2+\|u'\|_{L_{4/3}(t,t+1;V^*)}^2 \\ \leq \Psi(\|n\|_{L_{2}(t,t+1;L_2)}, \|n\|_{L_{(3m-2)/2}(t,t+1;L_{(3m-2)/2})}, \\ \|c\|_{L_{\infty}(t,t+1;H^1)},\|u\|_{L_{\infty}(t,t+1;H)},\|u\|_{L_{2}(t,t+1;V)})\end{multline} with some continuous function $\Psi$ independent of $t\geq 0$. \end{lemma}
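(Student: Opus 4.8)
The plan is to bound each time derivative by testing the corresponding weak formulation \eqref{weak1}--\eqref{weak3} against an arbitrary admissible test function and estimating the resulting terms in the correct dual norm, using only the quantities that appear as arguments of $\Psi$ in \eqref{difest}; all of these are controlled on any unit time window by the energy inequality \eqref{ener}. I would treat the three equations in turn.

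For $u'$: test \eqref{weak3} by $\psi\in V_\delta$ and note that for $m>2\geq d/(d-1)$ one has the Sobolev embedding $V\hookrightarrow L_4$ in dimensions $d=2,3$, so the convective term $\sum_{i,j}(u_iu_j,\partial\psi_j/\partial x_i)$ is bounded by $C\|u\|_{L_4}^2\|\psi\|_{V}\le C\|u\|^{1/2}\|\nabla u\|^{3/2}\|\psi\|_V$ by Ladyzhenskaya; the viscous term contributes $\|\nabla u\|\|\nabla\psi\|$ and the forcing term $(n\nabla\phi,\psi)\le C\|\nabla\phi\|_{L_\infty}\|n\|\|\psi\|$. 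Taking the supremum over $\|\psi\|_{V}\le 1$ (hence over $\|\psi\|_{V_\delta}\le 1$) and raising to the power $4/3$, the worst term is the $\|\nabla u\|^{3/2}$ one, which is integrable in $t$ to the power $4/3$ precisely because $u\in L_2(t,t+1;V)$; this yields the $L_{4/3}(t,t+1;V^*)$ bound. For $c'$: test \eqref{weak2} by $\theta\in H^1$; the term $(uc,\nabla\theta)\le\|u\|_{L_4}\|c\|_{L_4}\|\nabla\theta\|\le C\|u\|^{1/2}\|\nabla u\|^{1/2}\|c\|_1\|\theta\|_1$, the diffusion term is $\le\|\nabla c\|\|\theta\|_1$, and $(k(c)n,\theta)\le \|k(c)\|_{L_\infty}\|n\|\|\theta\|_1$ with $\|k(c)\|_{L_\infty}$ controlled by \eqref{bound5}, i.e. by $\|c\|_{L_\infty(t,t+1;H^1)}$ since $H^1\cap L_\infty$ is what the energy gives; squaring and integrating in $t$ gives the $L_2(t,t+1;(H^1)^*)^2$ bound because $\|\nabla u\|\in L_2$ and $\|n\|\in L_2$ on the window.

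The delicate case is $n'$, which is why the exponent $\frac32-\frac1m$ and the space $W_1^{-2}$ appear rather than simpler ones. I would test \eqref{weak1} by $\zeta\in W^2_\infty$ (the predual of $W_1^{-2}$) so that one further integration by parts moves a derivative onto $\zeta$: the porous-medium term becomes $(n^m,\Delta\zeta)$, bounded by $\|n^m\|_{L_1}\|\zeta\|_{W^2_\infty}=\|n\|_{L_m}^m\|\zeta\|_{W^2_\infty}$, and $\|n\|_{L_m}^m$ is integrable in $t$ to the power $(3m-2)/(2m)=\frac32-\frac1m$ exactly because $n\in L_{(3m-2)/2}(t,t+1;L_{(3m-2)/2})$ on the window (this is the key matching of exponents, and it is why $m>2$ and \eqref{asf3} are invoked). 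The chemotaxis term, written as $(\chi(c)n\nabla c,\nabla\zeta)$, is $\le\|\chi(c)\|_{L_\infty}\|n\|\,\|\nabla c\|\,\|\zeta\|_{W^1_\infty}$; with $\|\nabla c\|$ only in $L_\infty$ and $\|n\|$ in $L_2$ on the window this term lies in $L_2\subset L_{\frac32-\frac1m}$ in time. The convective term $(un,\nabla\zeta)\le\|u\|\,\|n\|\,\|\zeta\|_{W^1_\infty}$, again in $L_2$ in time since both factors are. Finally the growth term $(f(n),\zeta)$ is controlled using \eqref{asf3}: $\|f(n)\|_{L_1}\le C(\|n\|_{L_m}^m+1)$, which is handled exactly like the porous-medium term. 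Collecting these, taking the supremum over $\|\zeta\|_{W^2_\infty}\le1$, raising to the power $\frac32-\frac1m$ and integrating over $(t,t+1)$ produces a bound of the required form, with $\Psi$ an explicit polynomial-type continuous function of its five arguments; its independence of $t$ is automatic since every estimate above is window-local with constants depending only on the fixed data $\nabla\phi,k,\chi,m$.

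The main obstacle is purely bookkeeping: one must make sure each term in the $n$-equation lands in $L_{\frac32-\frac1m}$ in time (and not a smaller Lebesgue exponent), which forces the use of the highest available spatial integrability of $n$, namely $L_{(3m-2)/2}$ coming from \eqref{boundbm}--\eqref{n10}; this is also exactly why \eqref{asf3} with $m>2$ is assumed, so that $f(n)$ inherits the same integrability as $n^m$ rather than worse. Everything else reduces to standard Hölder, Sobolev and Ladyzhenskaya inequalities, and no new a priori information beyond \eqref{ener} and the bounds established in the proof of Theorem \ref{mb1} is needed.
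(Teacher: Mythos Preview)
The paper does not actually prove this lemma; it is stated without proof, evidently regarded as a routine variant of the time-derivative bounds \eqref{boundt1}--\eqref{boundt3} already obtained by duality in the proof of Theorem~\ref{mb1}. Your proposal supplies exactly the details one would expect, and the argument is essentially correct: test each weak formulation against the appropriate dual space, bound term by term with H\"older/Ladyzhenskaya, and verify the resulting time integrability. In particular, your identification of the exponent match --- that $\|n\|_{L_m}^m$ raised to the power $\tfrac32-\tfrac1m$ is precisely $\|n\|_{L_m}^{(3m-2)/2}$, controlled via $L_{(3m-2)/2}\hookrightarrow L_m$ (bounded $\Omega$, $m>2$) by the argument $\|n\|_{L_{(3m-2)/2}(t,t+1;L_{(3m-2)/2})}$ --- is the key computation and is right; the same handles the growth term through \eqref{asf3}.

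A couple of minor imprecisions to tidy: (i) to obtain the $V^*$ norm you should test with $\psi\in V$, not $\psi\in V_\delta$; your parenthetical ``hence over $\|\psi\|_{V_\delta}\le1$'' is backwards since $V\subset V_\delta$; (ii) the bounds on $\|k(c)\|_{L_\infty}$ and $\|\chi(c)\|_{L_\infty}$ come from \eqref{vol} (i.e.\ the fixed ceiling $c_{\mathcal O}$), not from the $H^1$ norm of $c$, so they enter $\Psi$ as fixed constants rather than through its arguments; (iii) in the convective term $(un,\nabla\zeta)$ you say both factors are in $L_2$ in time, but in fact $\|u\|$ is in $L_\infty$ and $\|n\|$ in $L_2$ --- the product is still in $L_2$, so the conclusion stands. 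None of these affects the validity of the argument.
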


\begin{theorem} \label{ma1}  The trajectory space $ \mathcal {H} ^ + $ possesses a minimal trajectory attractor and a global attractor. 
\end{theorem}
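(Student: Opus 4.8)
The plan is to verify the hypotheses of Propositions \ref{415} and \ref{416}, i.e. to exhibit an absorbing set $P$ for the trajectory space $\mathcal{H}^+$ which is bounded in $L_\infty(0,+\infty;E)$ and relatively compact in $C([0,+\infty);E_0)$; the two propositions then immediately deliver both attractors. First I would use the energy inequality \eqref{ener} together with \eqref{vol}: for a family $B\subset\mathcal{H}^+$ bounded in $L_\infty(0,+\infty;E)$, the right-hand side of \eqref{ener} is controlled, uniformly, once $t$ is large enough (the $e^{-\gamma t}$ factor kills the dependence on the initial data), so after a shift all trajectories in $B$ land in a fixed ball $P$ defined by the bounds
\begin{multline*}
\|n\|_{L_\infty(t,t+1;L_{m/2})}^{m/2}+\|n\ln n\|_{L_\infty(t,t+1;L_1)}+\|c\|^2_{L_\infty(t,t+1;H^1)}+\|u\|^2_{L_\infty(t,t+1;H)}\\
+\|n\|^2_{L_2(t,t+1;L_2)}+\|n^{(3m-2)/2}\|_{L_1(t,t+1;L_1)}+\|c\|^2_{L_2(t,t+1;H^2)}+\|u\|^2_{L_2(t,t+1;V)}\le R
\end{multline*}
for all $t\ge0$, and additionally $\|c(t)\|_{L_\infty}\le c_\mathcal{O}$. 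This $P$ is absorbing and bounded in $L_\infty(0,+\infty;E)$ by construction.

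The heart of the matter is the relative compactness of $P$ in $C([0,+\infty);E_0)$. Here I would use the Aubin--Lions--Simon machinery in the translation-compactness form from \cite[Chapter 4]{book}: it suffices to show that on each interval $[t,t+1]$ the trajectories are bounded in a space compactly embedded into $C([t,t+1];E_0)$, uniformly in $t$. For the $c$-component we have $c$ bounded in $L_2(t,t+1;H^2)$ and, by the Lemma, $c'$ bounded in $L_2(t,t+1;(H^1)^*)$, and $H^2\subset\subset H^{1-\delta}\subset(H^1)^*$; for $u$, boundedness in $L_2(t,t+1;V)$ plus $u'$ in $L_{4/3}(t,t+1;V^*)$ with $V\subset\subset V_\delta^*\subset V^*$; for $n$, boundedness in $L_{(3m-2)/2}(t,t+1;L_{(3m-2)/2})$ — together with the gradient bound on $n^{(3m-2)/4}$ that is implicit in \eqref{ener} via \eqref{wat15}--\eqref{wat16}, giving $n^{(3m-2)/4}$ bounded in $L_2(t,t+1;H^1)$ hence $n$ in some $L_q(t,t+1;W^1_q)\subset\subset L_q(t,t+1;W^{-\delta}_{m/2})$ — plus $n'$ bounded in $L_{3/2-1/m}(t,t+1;W^{-2}_1)$. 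Applying the translation-compact version of Aubin--Lions--Simon componentwise yields that $P$, restricted to each $[t,t+1]$, is relatively compact in $C([t,t+1];E_0)$ with a modulus of continuity uniform in $t$; a diagonal argument over $t=0,1,2,\dots$ then upgrades this to relative compactness in $C([0,+\infty);E_0)$ with the Fréchet pre-norm defined in Section 2.

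With $P$ in hand, Proposition \ref{415} gives the minimal trajectory attractor $\mathcal{U}$, and Proposition \ref{416} gives the global attractor $\mathcal{A}=\{\xi(t):\xi\in\mathcal{U}\}$ (independent of $t$). The main obstacle I anticipate is not the abstract scheme but the bookkeeping needed to feed the Lemma's estimate \eqref{difest} correctly: one must check that every norm appearing as an argument of $\Psi$ is indeed uniformly bounded on $P$ — in particular the $L_{(3m-2)/2}$-in-spacetime bound on $n$, which comes from \eqref{wat16} and requires $m>2$ and the growth restriction \eqref{asf3} — and that the resulting time-derivative bounds are strong enough, against the chosen exponent $\delta\in(0,1]$, for the compact embeddings $W^1_q\subset\subset W^{-\delta}_{m/2}$, $H^2\subset\subset H^{1-\delta}$, $V\subset\subset V^*_\delta$ to apply while still dominating the derivative spaces $W^{-2}_1$, $(H^1)^*$, $V^*$. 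Once the exponents are lined up, the verification is routine and the conclusion follows.
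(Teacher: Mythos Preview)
Your proposal is correct and follows essentially the same route as the paper: define an absorbing set $P$ by the uniform-in-$t$ bounds coming from \eqref{ener} (the paper's \eqref{ener1}) together with the derivative estimate \eqref{difest}, check that $P$ is bounded in $L_\infty(0,+\infty;E)$, apply Aubin--Lions--Simon on each finite interval to get relative compactness in $C([0,M];E_0)$, and then invoke Propositions~\ref{415} and~\ref{416}.

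One small point is worth cleaning up. For the $n$-component you appeal to a gradient bound on $n^{(3m-2)/4}$ ``implicit in \eqref{ener} via \eqref{wat15}--\eqref{wat16}''. That gradient bound is \emph{not} part of the definition of admissible weak solutions: \eqref{ener} only records $\|n^{(3m-2)/2}\|_{L_1(t,t+1;L_1)}$, while \eqref{wat15} was a formal intermediate estimate in the proof of Theorem~\ref{wsol1}, not a property guaranteed for every element of $\mathcal H^+$. Fortunately you do not need it. The paper's (implicit) argument is simpler: from \eqref{ener1} one has $n\in L_\infty(t,t+1;L_{m/2})$, the embedding $L_{m/2}\subset\subset W^{-\delta}_{m/2}$ is compact on a bounded domain, $W^{-\delta}_{m/2}\subset W^{-2}_1$, and by the Lemma $n'\in L_{3/2-1/m}(t,t+1;W^{-2}_1)$ with $\tfrac 3 2-\tfrac 1 m>1$ (since $m>2$); Simon's theorem then gives compactness in $C([0,M];W^{-\delta}_{m/2})$ directly. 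With this adjustment your argument matches the paper's.
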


\begin{proof} Due to Propositions \ref{415} and \ref{416}, it suffices to find an absorbing set for
the trajectory space $ \mathcal {H} ^ + $, which is relatively compact in $C ([0, + \infty); E_0) $
and bounded in $L_\infty (0, + \infty; E) $. Consider the set $P$ of all triples $(n,c,u)\in C ([0, + \infty); E_0) \cap L_\infty (0, + \infty; E)$ such that \eqref{difest} and
 \begin{multline}\label{ener1} \|n\|_{L_{\infty}(t,t+1;L_1)}+\|n\ln n\|_{L_{\infty}(t,t+1;L_1)} \\ +\|n\|_{L_{\infty}(t,t+1;L_{\max(1,m/2)})}^{\max(1,m/2)}+\|c\|^2_{L_{\infty}(t,t+1;H^1)}+\|u\|^2_{L_{\infty}(t,t+1;H)} \\ +\|n\|^2_{L_{2}(t,t+1;L_2)}+\|n^{(3m-2)/2}\|_{L_{1}(t,t+1;L_1)} \\ +\|c\|_{L_{2}(t,t+1;H^2)}^2+\|u\|_{L_{2}(t,t+1;V)}^2\leq 2\Gamma,\end{multline}  
hold for every $t\geq 0$.

 It is an absorbing set for
the trajectory space $ \mathcal {H} ^ + $ and is
bounded in $L_\infty (0, + \infty; E) $.  By the Aubin--Lions--Simon lemma, the set $\{y|_{[0,M]}, y\in P\}$ is relatively compact in $C ([0, M]; E_0) $ for any $M>0$.  This implies (cf. \cite[p. 183]{book}) that $P$ is relatively compact in $C ([0, + \infty); E_0) $.
\end{proof}

\begin{remark} \label{remattr}
Observe that \eqref{asf2} implies \eqref{asfs} for all positive $\gamma$, in particular, for the ones at which \eqref{lam} and \eqref{lam1} hold true. Thus, one can expect existence of attractors in the subcritical case. We leave it as an open problem. 
\end{remark}
 
\bibliography{ksns}

\bibliographystyle{abbrv}
\end{document}